\theoremstyle{plain}
\newtheorem{theorem}{Theorem}
\newtheorem{lemma}[theorem]{Lemma}
\newtheorem{corollary}[theorem]{Corollary}
\theoremstyle{definition}
\theoremstyle{remark}
\newcommand{\la}{\langle}
\newcommand{\ra}{\rangle}
\newcommand{\trace}{\mathop{\operator@font Trace}}
\newcommand{\vspan}{\mathop{\operator@font Span}}
\newcommand{\Int}{\mathop{\operator@font Int}}
\newcommand{\grad}{\mathop{\operator@font grad}}
\begin{document}

\title{Semi-parallel real hypersurfaces in complex two-plane Grassmannians
\thanks{This work was supported in part by the UMRG research grant (Grant No. RG163/11AFR)}
}

\author{Tee-How \textsc{Loo}
\\
Institute of Mathematical Sciences, University of Malaya \\
50603 Kuala Lumpur, Malaysia.	\\ 
\ttfamily{looth@um.edu.my}
}
\date{}
\maketitle

\abstract{We prove that there does not exist any semi-parallel real hypersurface in complex two-plane
Grassmannians. With this result, the nonexistence of recurrent real hypersurfaces in complex two-plane Grassmannians can also be proved.}

\medskip\noindent
\emph{2010 Mathematics Subject Classification.}
Primary  53C40 53B25; Secondary 53C15.

\medskip\noindent
\emph{Key words and phrases.}
Complex two-plane Grassmannians, semi-parallel real hypersurfaces, recurrent real hypersurfaces, Hopf hypersurfaces

\section{Introduction}
The notion of semi-parallel submanifolds, as a generalization of parallel submanifolds 
(submanifolds with parallel second fundamental form), was first studied by Deprez in \cite{deprez2}. 
A submanifold $M$ in a Riemannian manifold is said to be \emph{semi-parallel} if 
the second fundamental form $h$ satisfies $\bar R\cdot h=0$, where $\bar R$ is the curvature tensor corresponding to the van der Waerden-Bortolotti connection.

It was proved in \cite{deprez} that a semi-parallel hypersurface in a Euclidean space is either flat; parallel; or is an open part of a round cone or of a product of a round cone and a linear subspace.
When the ambient space is a sphere or real hyperbolic space, Dillen showed that a semi-parallel 
hypersurface is either an open part of a flat surface, parallel or an open part of a rotation hypersurfaces of certain helices \cite{dillen}.
A thorough survey on the study of semi-parallel submanifolds in a real space form can be found in \cite{lumiste}.

When the ambient space is a non-flat complex space form, 
parallel submanifolds were classified by Naitoh \cite{naitoh}.
As a result, the shape operator of a real hypersurface cannot be parallel.
The existence problem of semi-parallel real hypersurfaces was first studied by Maeda \cite{maeda}
for complex projective spaces of complex dimension greater than two, followed by 
Niegerball and Ryan \cite{ryan} for non-flat complex space forms of complex dimension two; and had completely been solved by Ortega \cite{ortega}. 
\begin{theorem}[\cite{ortega}]\label{thm:ortega}
There does not exist any semi-parallel real hypersurface in a non-flat complex space form.  
\end{theorem}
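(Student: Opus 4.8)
The plan is to convert the semi-parallel condition into a purely algebraic commutation relation for the shape operator, feed it through the Gauss equation, and then show that the resulting restrictions on the principal curvatures are inconsistent when the holomorphic sectional curvature $4c$ is nonzero. Throughout let $N$ be a unit normal, $A$ the shape operator, and $(\phi,\xi,\eta,g)$ the induced almost contact metric structure, so that $h(X,Y)=g(AX,Y)N$ and $\phi\xi=0$.

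First I would exploit that the normal bundle of a hypersurface is one-dimensional, so the normal connection is flat and $N$ may be taken parallel. Consequently $\bar R\cdot h=0$ loses its normal-curvature part, and writing $(\bar R(X,Y)\cdot h)(Z,W)=-h(R(X,Y)Z,W)-h(Z,R(X,Y)W)$ together with the skew-symmetry of $R(X,Y)$ collapses the semi-parallel condition to
\[
R(X,Y)\,A=A\,R(X,Y)\qquad\text{for all }X,Y,
\]
that is, every curvature operator commutes with $A$. Diagonalizing $A$ at a point with $Ae_i=\lambda_i e_i$, this is equivalent to $(\lambda_i-\lambda_j)\,g(R(e_k,e_l)e_i,e_j)=0$ for all $k,l$, so that every curvature component linking distinct eigenspaces of $A$ must vanish.

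Next I would substitute the Gauss equation, whose curvature tensor splits into a quadratic shape-operator part and $c$-terms built from $\phi,\xi,\eta$. Evaluating the vanishing of the cross-eigenspace components on a $\phi$-invariant plane gives a clean scalar consequence: if $AX=\lambda X$ and $A\phi X=\mu\,\phi X$ for a unit $X\perp\xi$, then a direct computation from Gauss yields $R(\phi X,X)X=(\lambda\mu+4c)\,\phi X$, and since $X,\phi X$ may lie in different eigenspaces the commutation relation forces
\[
(\lambda-\mu)(\lambda\mu+4c)=0.
\]
To make $\phi X$ genuinely principal I must first establish that $M$ is \emph{Hopf}, $A\xi=\alpha\xi$: testing the commutation relation with $Y=\xi$ and using $\phi\xi=0$ isolates the component of $A\xi$ orthogonal to $\xi$, and one shows it must vanish. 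On a Hopf hypersurface the Codazzi equation then supplies the classical eigenvalue relation
\[
2\lambda\mu=\alpha(\lambda+\mu)+2c
\]
on each $\phi$-invariant plane.

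The endgame is to confront $(\lambda-\mu)(\lambda\mu+4c)=0$ with $2\lambda\mu=\alpha(\lambda+\mu)+2c$ across all eigenspaces, and this I expect to be the main obstacle. Neither relation is contradictory in isolation, so one must bring in the remaining vanishing components $g(R(e_k,e_l)X,\phi X)=0$ and the multiplicities of the principal curvatures to eliminate, in turn, the subcase $\lambda=\mu$ (where $A\phi=\phi A$ and the explicit models furnished by the classification of hypersurfaces with commuting shape operator and structure tensor can be tested directly) and the subcase $\lambda\mu=-4c$. The signs $c>0$ and $c<0$ are treated in parallel, the role of non-flatness being only that $c\neq0$, which is exactly what makes the two eigenvalue constraints overdetermined. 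Showing that no admissible assignment of eigenvalues and multiplicities survives all of these conditions yields the contradiction and proves the theorem.
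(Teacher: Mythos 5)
First, a framing remark: the paper does not prove this statement at all --- Theorem~\ref{thm:ortega} is quoted from Ortega \cite{ortega} purely as motivation, so there is no in-paper proof to compare against, and your proposal must stand on its own. Its skeleton is the standard one, and the algebra you actually carry out is correct: for a hypersurface the normal connection is flat, so $\bar R\cdot h=0$ is equivalent to $R(X,Y)A=AR(X,Y)$ for all $X,Y$; in a principal basis this gives $(\lambda_i-\lambda_j)\langle R(e_k,e_l)e_i,e_j\rangle=0$; and on a $\phi$-invariant principal plane the Gauss equation does yield $(\lambda-\mu)(\lambda\mu+4c)=0$, while Codazzi gives $2\lambda\mu=\alpha(\lambda+\mu)+2c$ on a Hopf hypersurface.

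The problem is that the two steps carrying essentially all of the difficulty are asserted rather than proved. (1) \emph{The Hopf step.} You claim that substituting $Y=\xi$ into the commutation relation ``isolates'' the tangential component of $A\xi$ and shows it vanishes. It does not: writing $A\xi=\alpha\xi+\beta W$ with $W\perp\xi$ and substituting $Y=\xi$ into $R(X,Y)A\xi=AR(X,Y)\xi$ produces an identity coupling $AX$, $A^2X$, $\|A\xi\|^2$ and $\phi A\xi$ in which $\beta$ is not isolated by any single choice of $X$; eliminating the non-Hopf possibility is a genuine multi-step argument and is moreover dimension-sensitive --- this is exactly why the theorem's history runs Maeda \cite{maeda} for $n\geq3$, then Niebergall--Ryan \cite{ryan} for $n=2$, then Ortega \cite{ortega}. (For comparison, in the present paper's Grassmannian analogue the corresponding ``$\xi$ not in a distinguished position'' case occupies most of Section 4.) (2) \emph{The endgame.} You explicitly defer the contradiction (``this I expect to be the main obstacle''), so even granting Hopfness the proof is not complete. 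It can be finished: applying the commutation relation to the pair $(X,\xi)$ with $AX=\lambda X$, $X\perp\xi$, gives the further relation $(\lambda-\alpha)(\lambda\alpha+c)=0$, which (after disposing of $\alpha=0$ via Codazzi) confines the spectrum on $\xi^\perp$ to $\{\alpha,-c/\alpha\}$, and checking the possible $\phi$-invariant pairs against your two displayed relations forces $c=0$ in every case --- but none of this appears in your text. There is also a smaller oversight: even on a Hopf hypersurface, $\phi X$ need not be principal when $2\lambda=\alpha$, since Codazzi only gives $(2\lambda-\alpha)A\phi X=(\alpha\lambda+2c)\phi X$, and the degenerate situation $2\lambda=\alpha$, $\alpha\lambda+2c=0$ genuinely occurs in $\mathbb{C}H^n$ (horospheres), so your dichotomy does not even apply there without an additional argument. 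As it stands, the proposal is a correct reduction plus a plan; the theorem's actual content is in the parts left undone.
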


For codimension greater than one, Kon \cite{kon} proved that there does not exist any semi-parallel proper CR-submanifold in a complex projective space with semi-flat normal connection and with CR-dimension greater than one.
As a byproduct of their main results in \cite{lobos}, Chac\'{o}n and Lobos have classified all semi-parallel Lagrangian surfaces in a complex space form.

The study of Riemannian submanifolds has been extended to ambient spaces which are symmetric spaces other than real space forms and complex space forms. In particular, the study of real hypersurfaces in complex two-plane Grassmannians $G_2(\mathbb C_{m+2})$ has been an active field recently.

$G_2(\mathbb C_{m+2})$ is the unique compact irreducible Riemannian symmetric space with both a Kaehler structure $J$ and a quaternionic Kaehler structure $\mathfrak J$.
These two geometric structures induce on its real hypersurfaces $M$ a (local) almost contact $3$-structure
$(\phi_a,\xi_a,\eta_a)$, $a\in\{1,2,3\}$ and almost contact structure $(\phi, \xi,\eta)$.
In \cite{berndt-suh}, Berndt and Suh classified all real hypersurfaces $M$ in $G_2(\mathbb C_{m+2})$ on which both $\vspan\{\xi\}$ and $\mathfrak D^\perp$ are invariant under the shape operator $A$ of $M$,
where $\mathfrak D^\perp$ is the distribution on $M$ defined by
$\mathfrak D^\perp_x=\vspan\{\xi_1,\xi_2,\xi_3\}$, $x\in M$. 
Such real hypersurfaces can be expressed as tubes around totally geodesic submanifolds $G_2(\mathbb C_{m+1})$
or $\mathbb  HP_{m/2}$ in $G_2(\mathbb C_{m+2})$
(cf. Theorem~\ref{thm:classification}). 

Since then, a number of interesting results along this line have been obtained.
For instance, 
the characterizations of real hypersurfaces under certain nice relationships between the shape operator $A$ and 
the almost contact structure $\phi$ (see \cite{berndt-suh2}, \cite{suh5}); and some recent papers
(see \cite{machado2}, \cite{machado1}, \cite{suh6}).
In \cite{suh1}, Suh proved the following result. 
\begin{theorem}[\cite{suh1}]\label{thm:suh}
There does not exist any parallel real hypersurface in $G_2(\mathbb C_{m+2})$, $m\geq3$. 
\end{theorem}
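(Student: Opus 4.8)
The plan is to argue by contradiction, taking as the main engine the Codazzi equation together with the explicit form of the Riemannian curvature tensor $\bar R$ of $G_2(\mathbb C_{m+2})$. Suppose $M$ is a real hypersurface with parallel shape operator, $\nabla A=0$. Writing $N$ for a local unit normal and recalling $\xi=-JN$, $\xi_a=-J_aN$, the Codazzi equation reads $(\nabla_X A)Y-(\nabla_Y A)X=(\bar R(X,Y)N)^{\top}$, where $^\top$ denotes the tangential projection. Since $\nabla A=0$ kills the left-hand side, I would first record the key identity $(\bar R(X,Y)N)^{\top}=0$ for all $X,Y\in TM$, so that everything reduces to analysing this single tensorial equation.

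Next I would expand $(\bar R(X,Y)N)^{\top}$ using the explicit curvature formula for $G_2(\mathbb C_{m+2})$ together with the hypersurface identities $JX=\phi X+\eta(X)N$, $J_aX=\phi_aX+\eta_a(X)N$, the relation $\phi\xi_a=\phi_a\xi$, and the quaternionic relations $J_aJ_b=-\delta_{ab}+\sum_c\epsilon_{abc}J_c$. This produces an expression built only from $\phi,\phi_a,\xi,\xi_a,\eta,\eta_a$ and, crucially, free of $A$. The point of this step is to feed well-chosen test vectors into the vanishing identity. Taking $Y=\xi$ and letting $X$ range over the orthogonal complement of $\vspan\{\xi,\xi_1,\xi_2,\xi_3\}$ isolates, inside $\mathfrak D$, a term proportional to the nonzero vector $\phi X$; forcing its coefficient to vanish already gives strong restrictions, and by further varying $X$ over $\mathfrak D$ and $\mathfrak D^\perp$ I expect to deduce that $\xi$ is principal (so that $M$ is Hopf) and that $\mathfrak D^\perp$ is invariant under $A$.

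With the Hopf condition and the $A$-invariance of $\mathfrak D^\perp$ in hand, these are precisely the hypotheses of the Berndt--Suh classification recorded in Theorem~\ref{thm:classification}, valid for $m\geq3$. Invoking it, $M$ must be an open part of a tube of type $(A)$ around a totally geodesic $G_2(\mathbb C_{m+1})$ or of type $(B)$ around a totally geodesic $\mathbb HP_{m/2}$, for each of which the principal curvatures are constant and explicitly known. On the other hand, since parallelism makes $\alpha$ constant, differentiating $A\xi=\alpha\xi$ and using $\nabla_X\xi=\phi AX$ gives $(\nabla_XA)\xi=\alpha\phi AX-A\phi AX$; hence $\nabla A=0$ forces $A\phi A=\alpha\phi A$, an identity that the explicit principal-curvature data of the two tubes violate. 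This contradiction eliminates both models and completes the proof.

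The step I expect to be the main obstacle is the extraction of the two invariance conditions from $(\bar R(X,Y)N)^{\top}=0$. The expansion genuinely mixes the $\mathfrak D$- and $\mathfrak D^\perp$-components, and the analysis bifurcates according to the relative position of $\xi$ with respect to $\mathfrak D^\perp$, namely the generic case $\xi\perp\mathfrak D^\perp$ versus the singular case $\xi\in\mathfrak D^\perp$. Disentangling these components cleanly, and ruling out the degenerate configurations, is where the hypothesis $m\geq3$ enters: then $\dim\mathfrak D=4m-4\geq8$ leaves enough room to select test vectors that avoid all coincidences among $\xi,\xi_a,\phi\xi_a$ while still surviving the tangential projection.
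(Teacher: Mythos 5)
The statement you are proving is one the paper itself does not prove: Theorem~\ref{thm:suh} is quoted from \cite{suh1}, so your attempt can only be measured against what a correct argument must do, not against an internal proof. Your opening moves are sound: with $\nabla A=0$, the Codazzi equation does reduce everything to the identity $(\hat R(X,Y)N)^{\top}=0$ for all $X,Y\in TM$, and expanding this via (\ref{eqn:hatR}) does yield an expression in $\phi,\phi_a,\xi,\xi_a$ alone. The genuine gap is your middle step. As you yourself emphasize, that identity is \emph{free of $A$}; consequently no choice of test vectors in it can ever produce a statement about the shape operator, and in particular it cannot yield ``$\xi$ is principal'' or ``$A\mathfrak D^{\perp}\subset\mathfrak D^{\perp}$''. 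This is a non sequitur, so the hypotheses of Theorem~\ref{thm:classification} are never legitimately established, and your final step (the identity $A\phi A=\alpha\phi A$ versus the principal-curvature data of types $A$ and $B$, which by itself would indeed rule both out) is never reached.

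The repair is that your engine is stronger than you used it for: the $A$-free identity is already self-contradictory, so the proof should end at your step 3 and the classification detour is unnecessary. Explicitly, projecting (\ref{eqn:hatR}) with $Z=N$ and $Y=\xi$ gives, for tangent $X$,
\begin{align*}
(\hat R(X,\xi)N)^{\top}=\phi X+\sum_{a=1}^{3}\left\{u_a\phi_aX-\eta_a(X)\phi\xi_a-3\la X,\phi\xi_a\ra\xi_a\right\},\qquad u_a=\eta_a(\xi).
\end{align*}
Choose the canonical basis so that $u_2=u_3=0$ and take $X\in\mathcal H$; the identity collapses to $\phi X+u_1\phi_1X=0$. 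Pairing with $\phi X$ and using $\la\phi_1X,\phi X\ra=-\la\theta_1X,X\ra$ gives $||X||^2=u_1\la\theta_1X,X\ra$ for every $X\in\mathcal H$. By Lemma~\ref{lem:theta_2}, ${\theta_1}_{|\mathcal H}$ has both eigenvalues $\pm1$ with eigenspaces of equal, nonzero dimension (here $m\geq3$ enters: $\dim\mathcal H\geq 4m-8\geq4$), so testing $X\in\mathcal H_1(1)$ forces $u_1=1$ while $X\in\mathcal H_1(-1)$ forces $u_1=-1$, a contradiction. This direct route is also the one consistent in spirit with the paper's own technique in Section~4, where contradictions are extracted by pairing curvature identities against $\theta_a$-eigenvectors rather than by appealing to the classification.
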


Motivated by Theorem~\ref{thm:ortega} and Theorem~\ref{thm:suh}, it is natural to ask if there are any semi-parallel real hypersurfaces in $G_2(\mathbb C_{m+2})$. 
In this paper, we shall answer this question in negative, that is, 
\begin{theorem}	\label{thm:main}
There does not exist any semi-parallel real hypersurface in $G_2(\mathbb C_{m+2})$, $m\geq 3$.
\end{theorem}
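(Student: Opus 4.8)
The plan is to assume, for contradiction, that $M$ is a semi-parallel real hypersurface in $G_2(\mathbb C_{m+2})$ with $m \geq 3$, and to derive strong algebraic constraints on the shape operator $A$ from the semi-parallel condition $\bar R \cdot h = 0$. For a hypersurface the second fundamental form $h$ is essentially the shape operator $A$, and the van der Waerden--Bortolotti curvature $\bar R$ combines the intrinsic curvature $R$ of $M$ with the normal connection. The semi-parallel condition then translates into the pointwise identity
\begin{equation*}
(R(X,Y)\cdot A)Z = R(X,Y)AZ - A\,R(X,Y)Z = 0
\end{equation*}
for all tangent vectors $X,Y,Z$, where $R$ is computed via the Gauss equation from the ambient curvature of $G_2(\mathbb C_{m+2})$. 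The first key step is therefore to write out the Gauss equation explicitly, inserting the known curvature tensor of the Grassmannian (which involves both the Kaehler structure $J$ and the quaternionic Kaehler structure $\mathfrak J$, hence the tensors $\phi,\xi,\eta$ and $\phi_a,\xi_a,\eta_a$). I would expect the resulting expression for $R(X,Y)$ to split into a universal curvature part plus terms quadratic in $A$.

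The second step is to exploit this identity by choosing the structure vectors as test directions. A natural and powerful specialization is to feed the Reeb vector field $\xi$ and the distribution vectors $\xi_1,\xi_2,\xi_3$ spanning $\mathfrak D^\perp$ into the relation $(R(X,Y)\cdot A)Z=0$. Because the ambient curvature acts in a controlled way on these structure vectors, such substitutions should force the hypersurface to be Hopf, i.e.\ $A\xi = \alpha\xi$, and should additionally constrain how $A$ acts on $\mathfrak D^\perp$. Having established the Hopf condition, I would invoke the Berndt--Suh classification (Theorem~\ref{thm:classification}) to reduce the problem to the explicit model hypersurfaces: the tubes around $G_2(\mathbb C_{m+1})$ and around $\mathbb{HP}_{m/2}$, whose principal curvatures and eigenspace decompositions are known in closed form.

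The final step is a case-by-case verification on these model spaces. For each tube the shape operator has a small number of distinct principal curvatures with explicitly known eigenvalues depending on the tube radius $r$. I would substitute pairs of principal vectors belonging to \emph{distinct} eigenspaces into the semi-parallel identity $(R(X,Y)\cdot A)Z=0$; since $R(X,Y)$ mixes the eigenspaces through the structure tensors, this produces scalar equations relating the principal curvatures. The expectation is that these equations are incompatible for every admissible value of $r$, yielding the desired contradiction and thereby proving nonexistence.

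The hard part, I anticipate, will be the intermediate step of forcing the Hopf condition and controlling the action of $A$ on $\mathfrak D^\perp$ directly from the semi-parallel equation without prematurely assuming any structure. The ambient curvature of $G_2(\mathbb C_{m+2})$ is considerably more intricate than that of a complex space form---it carries both the $J$-terms and the quaternionic $\mathfrak J$-terms, together with cross terms such as $\eta_a(\phi X)$---so isolating clean consequences from $(R(X,Y)\cdot A)Z=0$ will require a careful and lengthy bookkeeping of how these tensors interact. Once the problem is channelled into the classified Hopf case, the remaining computations, while tedious, are essentially mechanical.
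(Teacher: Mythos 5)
Your proposal follows essentially the same route as the paper: derive pointwise identities from $R\cdot A=0$ via the Gauss equation, specialize to the structure vectors $\xi,\xi_a$ to force $A\xi=\alpha\xi$ and $A\mathfrak D^\perp\subset\mathfrak D^\perp$, reduce to the Berndt--Suh model hypersurfaces, and kill the remaining type $A$ case by feeding principal vectors from distinct eigenspaces into the semi-parallel identity --- exactly the paper's final computation, which gives $0=2$ independently of the tube radius $r$. You also correctly anticipated where the real work lies: the paper spends most of its effort forcing the Hopf condition through a three-case analysis on the position of $\xi$ relative to $\mathfrak D$ and $\mathfrak D^\perp$, aided by the auxiliary symmetric tensors $\theta_a=\phi_a\phi-\xi_a\otimes\eta$.
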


Let $\mathcal E$ be a vector bundle over a manifold $M$.
A nonzero $\mathcal E$-valued tensor field $F$ of type $(r,s)$ on $M$ is said to be \emph{recurrent} if there exists a $1$-form $\omega$ in $M$ such that $\bar\nabla F=F\otimes\omega$,
where $\bar\nabla$ is the var der Waerden-Bortolotti connection. In particular, if $\omega=0$ then $F$ is parallel. 
Some geometric interpretations of a manifold $M$ with recurrent curvature tensor in terms of holonomy group were given in \cite{kobayashi}, \cite{wong}.

A submanifold of a Riemannian manifold is said to be \emph{recurrent} if its second fundamental form is recurrent.
The problem of determining the existence of (or classifying) recurrent real hypersurfaces in $G_2(\mathbb C_{m+2})$ has been considered and solved partially. In \cite{suh2}, the nonexistence of recurrent real hypersurfaces was proved under an additional assumption of $\mathfrak D$-invariance of the shape operator.
Kim, Lee and Yang proved in \cite{kim-lee-yang} that there does not exist any Hopf hypersurface with recurrent shape operator.
Recall that a real hypersurface is said to be \emph{Hopf} if the Reeb vector field $\xi$ is principal.

The second objective of this paper is to study the existence of recurrent real hypersurfaces in $G_2(\mathbb C_{m+2})$.
We first show that a recurrent symmetric tensor field $F$ of type $(1,1)$ on a Riemannian manifold  
is necessarily semi-parallel (cf. Theorem~\ref{thm:recurrent}). With this result and Theorem~\ref{thm:main}, we prove the nonexistence of recurrent real hypersurfaces in $G_2(\mathbb C_{m+2})$, $m\geq3$
(cf. Corollary~\ref{cor:recurrent}). 
This improves the results of Suh \cite{suh2} and Kim, Lee and Yang \cite{kim-lee-yang} mentioned above. 

This paper is organized as follows.
In Section 2, we recall some basic properties for $G_2(\mathbb C_{m+2})$ and its real hypersurfaces $M$. 
In Section 3, we first introduce a local symmetric tensor field $\theta_a$ of type $(1,1)$ on $M$, and then
derive some of its properties.
The proof of Theorem~\ref{thm:main} will be given in the next section.
In the last section, we prove the nonexistence of recurrent real hypersurfaces in $G_2(\mathbb C_{m+2})$.

\section{Real hypersurfaces in $G_2(\mathbb C_{m+2})$}
In this section we state some structural equations as well as some known results in the theory of real hypersurfaces in complex two-plane Grassmannians. 
We begin with some basic properties of complex two-plane Grassmannians (cf. \cite{berndt}), which are needed in our paper.

By $G_2(\mathbb C_{m+2})$,
we denote the set of all complex two-dimensional linear subspaces in $\mathbb C_{m+2}$.
Note that $G_2(\mathbb C_3)$ is isometric to the complex projective space $\mathbb CP_2(8)$
and $G_2(\mathbb C_4)$ is isometric to the real Grassmannian $G^+_2(\mathbb R^6)$ of oriented two-dimensional linear subspaces in $\mathbb R^8$. In this paper, we only consider $m\geq3$. 

Denote by $\la,\ra$ the Riemannian metric, $J$ the Kaehler structure and $\mathfrak J$ the quarternionic Kaehler structure on $G_2(\mathbb C_{m+2})$.
For each $x\in G_2(\mathbb C_{m+2})$, we denote by $\{J_1,J_2,J_3\}$ a canonical local basis of $\mathfrak J$
on a neighborhood $\mathcal U$ of $x$ in $G_2(\mathbb C_{m+2})$, that is, each $J_a$ is a local almost Hermitian structure 
such that
\begin{align}\label{eqn:quarternion}
J_aJ_{a+1}=J_{a+2}=-J_{a+1}J_a, \quad a\in\{1,2,3\}.  
\end{align}
Here, the index is taken modulo three.
Denote by $\hat\nabla$ the Levi-Civita connection of $G_2(\mathbb C_{m+2})$.
There exist local $1$-forms $q_1$, $q_2$ and $q_3$ such that
\[
\hat\nabla_XJ_a=q_{a+2}(X)J_{a+1}-q_{a+1}(X)J_{a+2}
\]
for any $X\in T_x	G_2(\mathbb C_{m+2})$, that is, $\mathfrak J$ is parallel with respect to $\hat\nabla$.
The Kaehler structure $J$ and quarternionic Kaehler structure $\mathfrak J$ are related by 
\begin{align}\label{eqn:JJa}
JJ_a=J_aJ; \quad \trace{(JJ_a)}=0, \quad a\in\{1,2,3\}.
\end{align}

The Riemannian curvature tensor $\hat R$ of $G_2(\mathbb C_{m+2})$ is locally given by
\begin{align}\label{eqn:hatR}
\hat R(X,Y)Z=&\la Y,Z\ra X-\la X,Z\ra Y+\la JY,Z\ra JX-\la JX,Z\ra JY-2\la JX,Y\ra JZ	\nonumber\\
&+\sum_{a=1}^3\{\la J_aY,Z\ra J_aX-\la J_aX,Z\ra J_aY-2\la J_aX,Y\ra J_aZ							\nonumber\\
&+\la JJ_aY,Z\ra JJ_aX-\la JJ_aX,Z\ra JJ_aY\}.
\end{align}
for all $X$, $Y$ and $Z\in T_xG_2(\mathbb C_{m+2})$.

For a nonzero vector $X\in T_xG_2(\mathbb C_{m+2})$, we denote by $\mathbb CX=\vspan\{X,JX\}$, $\mathfrak JX=\{J'X|J'\in\mathfrak J_x\}$, 
$\mathbb HX=\vspan\{X\}\oplus\mathfrak JX$, and 
$\mathbb H\mathbb CX$ the subspace spanned by $\mathbb HX$ and $\mathbb HJX$.  
If $JX\in\mathfrak JX$, we denote by $\mathbb C^\perp X$ the orthogonal complement of $\mathbb CX$ in $\mathbb HX$.

Let $M$ be a connected oriented real hypersurface isometrically immersed in $G_2(\mathbb C_{m+2})$, $m\geq3$, $N$ a unit normal vector field on $M$. The Riemannian metric on $M$ is denoted by the same $\la,\ra$.
A canonical local basis $\{J_1,J_2,J_3\}$ of $\mathfrak J$ on  $G_2(\mathbb C_{m+2})$ induces  a local almost contact metric $3$-structure $(\phi_a,\xi_a,\eta_a,\la,\ra)$ on $M$ by  
\begin{align*}	
J_aX=\phi_a X+\eta_a(X)N,	\quad J_aN=-\xi, \quad \eta_a(X)=\la\xi_a,X\ra
\end{align*}
for any $X\in TM$. 
It follows from (\ref{eqn:quarternion}) that  
\begin{align*}
&\phi_a\phi_{a+1}-\xi_a\otimes\eta_{a+1}=\phi_{a+2}=-\phi_{a+1}\phi_a+\xi_{a+1}\otimes\eta_a \\ 
&\phi_a\xi_{a+1}=\xi_{a+2}=-\phi_{a+1}\xi_a.  
\end{align*}
Denote by $(\phi, \xi,\eta,\la,\ra)$ the almost contact metric structure on $M$ induced by $J$, that is, 
\begin{align*}	
JX=\phi X+\eta(X)N,	\quad JN=-\xi, \quad \eta(X)=\la\xi,X\ra. 
\end{align*}
The vector field $\xi$ is known as the \emph{Reeb vector field}. 
A real hypersurface $M$ is said to be \emph{Hopf} if $\xi$ is principal.

It follows from (\ref{eqn:JJa}) that the two structures $(\phi,\xi,\eta,\la,\ra)$ and $(\phi_a,\xi_a,\eta_a,\la,\ra)$
are related as follows  
\begin{align*}
\phi_a\phi-\xi_a\otimes\eta=\phi\phi_a-\xi\otimes\eta_a; \quad \phi\xi_a=\phi_a\xi.
\end{align*}

Denote by $\nabla$ the Levi-Civita connection and $A$ the shape operator on $M$. Then 
\begin{align*}
&(\nabla_{X} \phi)Y=\eta(Y)AX-\la AX,Y\ra\xi, \quad \nabla_X \xi = \phi AX \label{eqn:delxi}\\
&(\nabla_{X}\phi_a)Y=\eta_a(Y)AX-\la AX,Y\ra\xi_{a}+q_{a+2}(X)\phi_{a+1}Y-q_{a+1}(X)\phi_{a+2}Y \\
&\nabla_X \xi_a = \phi_a AX+q_{a+2}(X)\xi_{a+1}-q_{a+1}(X)\xi_{a+2} 
\end{align*}
for any $X,Y\in TM$. From these formulas, we have
\begin{align*}
X\eta(\xi_a)
&=\la\nabla_X\xi,\xi_a\ra+\la\xi,\nabla_X\xi_a\ra		\nonumber\\
&=-2\la A\phi\xi_a,X\ra+\eta(\xi_{a+1})q_{a+2}(X)-\eta(\xi_{a+2})q_{a+1}(X). 
\end{align*}
We define a distribution $\mathfrak D^\perp$ on $M$ by 
$\mathfrak D^\perp_x:=\vspan\{\xi_1,\xi_2,\xi_3\}$, $x\in M$, and denote by $\mathfrak D$ its orthogonal complement in
$TM$.
If $\xi\in\mathfrak D$ at each point in $M$ then $\eta(\xi_a)=0$, for $a\in\{1,2,3\}$, and so by the above equation, we obtain
\begin{lemma}\label{lem:Xua}
Let $M$ be a real hypersurface in $G_2(\mathbb C_{m+2})$. If $\xi$ is tangent to $\mathfrak D$ then $A\phi\xi_a=0$, for $a\in\{1,2,3\}$.
\end{lemma}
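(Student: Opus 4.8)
The plan is to read the conclusion directly off the pointwise identity established immediately before the statement, namely
\[
X\eta(\xi_a)=-2\la A\phi\xi_a,X\ra+\eta(\xi_{a+1})q_{a+2}(X)-\eta(\xi_{a+2})q_{a+1}(X).
\]
First I would translate the hypothesis into a statement about the functions $\eta(\xi_a)$. Since $\mathfrak D$ is by definition the orthogonal complement of $\mathfrak D^\perp=\vspan\{\xi_1,\xi_2,\xi_3\}$ in $TM$, the assumption that $\xi$ is tangent to $\mathfrak D$ \emph{at every point} forces $\eta(\xi_a)=\la\xi,\xi_a\ra=0$ on all of $M$, for each $a\in\{1,2,3\}$. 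In particular each $\eta(\xi_a)$ is the identically zero function, so its differential vanishes as well: $X\eta(\xi_a)=0$ for every $X\in TM$.

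Next I would substitute these two observations into the displayed identity. The left-hand side $X\eta(\xi_a)$ is zero by the preceding step, and on the right-hand side both coefficient terms vanish because $\eta(\xi_{a+1})=\eta(\xi_{a+2})=0$. What survives is $0=-2\la A\phi\xi_a,X\ra$ for every $X\in TM$, and since the metric $\la,\ra$ is nondegenerate this gives $A\phi\xi_a=0$ for each $a$, as claimed.

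I do not expect any genuine obstacle here: all the analytic content sits in the structural formula for $X\eta(\xi_a)$, which is already derived from $\nabla_X\xi=\phi AX$, $\nabla_X\xi_a=\phi_aAX+q_{a+2}(X)\xi_{a+1}-q_{a+1}(X)\xi_{a+2}$, together with the skew-symmetry of $\phi$, the self-adjointness of $A$, and the relation $\phi\xi_a=\phi_a\xi$. The lemma is then a one-line consequence. The only point requiring care is that the hypothesis must be read as holding at \emph{every} point of $M$, so that $\eta(\xi_a)$ is a constant (in fact zero) function and its derivative legitimately drops out; interpreting the hypothesis only at a single point would not suffice to kill the term $X\eta(\xi_a)$.
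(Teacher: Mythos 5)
Your proof is correct and is exactly the paper's argument: the paper derives the identity $X\eta(\xi_a)=-2\la A\phi\xi_a,X\ra+\eta(\xi_{a+1})q_{a+2}(X)-\eta(\xi_{a+2})q_{a+1}(X)$ immediately before the lemma and obtains the conclusion by noting that $\xi\in\mathfrak D$ at each point forces $\eta(\xi_a)\equiv0$, killing both sides except the $A\phi\xi_a$ term. Your added remark that the hypothesis must hold on all of $M$ (so that the derivative $X\eta(\xi_a)$ genuinely vanishes) is a correct and worthwhile clarification of the same proof, not a different route.
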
 

Finally we state some well-known results.
\begin{theorem}[\cite{berndt-suh}]\label{thm:classification}
Let $M$ be a connected real hypersurface in $G_2(\mathbb C_{m+2})$, $m\geq3$. 
Then both $\vspan\{\xi\}$ and $\mathfrak D^\perp$ are invariant under the shape operator of $M$ if and only if
\begin{enumerate}
\item[(A)] $M$ is an open part of a tube around a totally geodesic $G_2(\mathbb C_{m+1})$ in
$G_2(\mathbb C_{m+2})$, or
\item[(B)] $m$ is even, say $m=2n$, and $M$ is an open part of a tube around a totally
geodesic $\mathbb  HP_n$ in $G_2(\mathbb C_{m+2})$.
\end{enumerate}
\end{theorem}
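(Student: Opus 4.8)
The statement is an ``if and only if,'' so the plan is to split the proof into the two directions and treat the converse as the substantial one. For the easy direction I would verify directly that each model tube satisfies the two invariance conditions. Both $G_2(\mathbb C_{m+1})$ and $\mathbb HP_n$ are totally geodesic submanifolds of $G_2(\mathbb C_{m+2})$ whose normal spaces carry a well-understood action of $J$ and of the canonical basis $\{J_1,J_2,J_3\}$ of $\mathfrak J$. For a tube of radius $r$ around such a submanifold $P$, the shape operator $A$ is obtained by solving the Jacobi equation along the normal geodesics $\gamma$ with $\gamma'(0)\perp P$: the eigenvalues of the Jacobi operator $\hat R(\cdot,\gamma')\gamma'$, read off from (\ref{eqn:hatR}), produce the principal curvatures and principal curvature spaces via the usual trigonometric formulas for a compact symmetric space. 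I would then check that $\xi=-JN$ lies in a single principal curvature space, so that $\vspan\{\xi\}$ is $A$-invariant, and that $\mathfrak D^\perp=\vspan\{\xi_1,\xi_2,\xi_3\}$ is a sum of principal curvature spaces, so that it too is $A$-invariant; this is a finite computation using only how $J$ and the $J_a$ permute the normal directions of $P$.

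For the hard direction, assume that both $\vspan\{\xi\}$ and $\mathfrak D^\perp$ are $A$-invariant. Invariance of $\vspan\{\xi\}$ is exactly the Hopf condition $A\xi=\alpha\xi$ with $\alpha=\la A\xi,\xi\ra$, while invariance of $\mathfrak D^\perp$ means $A$ restricts to a self-adjoint endomorphism of the rank-three bundle $\mathfrak D^\perp$. The first step would be to combine these two conditions with the structure identities relating $(\phi,\xi,\eta)$ to $(\phi_a,\xi_a,\eta_a)$ and with the Weingarten-type formulas for $\nabla\xi$, $\nabla\xi_a$ and $\nabla\phi_a$ recorded in Section~2, in order to locate $\xi$ relative to $\mathfrak D^\perp$. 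I expect this to force a dichotomy, either $\xi\in\mathfrak D$ or $\xi\in\mathfrak D^\perp$, which is what ultimately separates case (A) from case (B).

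The decisive tool is the Codazzi equation $(\nabla_XA)Y-(\nabla_YA)X=(\hat R(X,Y)N)^\top$, into which I would substitute the explicit curvature tensor (\ref{eqn:hatR}) with $Z=N$. Using the invariance hypotheses to annihilate the off-diagonal blocks of $A$, this should yield a closed system of algebraic and first-order differential relations among $\alpha$, the eigenvalues of $A|_{\mathfrak D^\perp}$, and the functions $\eta(\xi_a)$; solving it would show that the principal curvatures are constant along each normal geodesic and evolve exactly as for the model tubes. The final step is a Jacobi-field and focal-set argument: one shows that the normal exponential map collapses at a fixed radius onto a totally geodesic submanifold $P$, so that $M$ is an open part of a tube around $P$, and then invokes the classification of totally geodesic submanifolds of $G_2(\mathbb C_{m+2})$ to identify $P$ as $G_2(\mathbb C_{m+1})$ or, when $m$ is even, as $\mathbb HP_n$.

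The hard part will be this last step. Verifying the invariance conditions for the tubes is routine once the Jacobi operator is diagonalized, and extracting the Hopf condition is immediate; but showing that an abstract hypersurface satisfying the two invariance conditions is genuinely a tube, rather than merely having tube-like principal curvatures, and correctly matching its focal submanifold to one of the two models, requires the full Jacobi-field analysis in the symmetric space together with the nontrivial list of its totally geodesic submanifolds.
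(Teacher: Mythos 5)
This statement is not proved in the paper at all: it is Berndt--Suh's classification theorem, imported verbatim from \cite{berndt-suh} (together with the principal-curvature tables quoted as Theorem~\ref{thm:A} and Theorem~\ref{thm:B}), and the paper uses it as a black box. So there is no proof here to compare yours against; the only meaningful comparison is with the original argument of Berndt and Suh.

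Measured against that, your outline reproduces the right global strategy: the ``if'' direction does follow from diagonalizing the Jacobi operator $\hat R(\cdot,\gamma')\gamma'$ along normal geodesics of the two totally geodesic submanifolds (one then reads off from the resulting eigenspace tables that $\vspan\{\xi\}$ and $\mathfrak D^\perp$ are sums of principal curvature spaces --- for type $A$ one has $\mathfrak D^\perp=T_\alpha\oplus T_\beta$, and for type $B$ one has $T_\beta=\mathfrak J J\xi=\mathfrak JN=\mathfrak D^\perp$ since $J\xi=N$); and the ``only if'' direction in \cite{berndt-suh} is indeed driven by the Codazzi equation with the explicit curvature tensor (\ref{eqn:hatR}), a dichotomy $\xi\in\mathfrak D$ or $\xi\in\mathfrak D^\perp$, and a focal-set/Jacobi-field rigidity argument identifying the focal manifold among the totally geodesic submanifolds. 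However, as a proof your text has genuine gaps exactly where you admit them: the dichotomy is only ``expected,'' the closed system of equations coming from Codazzi is only asserted to be solvable (``should yield''), and the final rigidity step --- showing that tube-like principal curvature data forces $M$ to lie on an actual tube, which requires proving the focal map has constant rank with totally geodesic image, plus the nontrivial fact that the resulting focal submanifold must be $G_2(\mathbb C_{m+1})$ or $\mathbb HP_n$ --- is named but not carried out. These deferred steps constitute essentially all of the content of the Berndt--Suh paper, so the proposal should be regarded as a correct plan of attack rather than a proof.
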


\begin{theorem}[\cite{berndt-suh}]\label{thm:A}
Let $M$ be a real hypersurface of type $A$ in $G_2(\mathbb C_{m+2})$. 
Then
$\xi\in\mathfrak D^\perp$ at each point of $M$.
Suppose $J_1\in\mathfrak J$ such that $J_1N=JN$. Then 
$M$ has three (if $r=\pi/2\sqrt8$) or four (otherwise) distinct constant principal curvatures
\[
\alpha=\sqrt8\cot(\sqrt8r),\ 
\beta=\sqrt2\cot(\sqrt2r),\
\lambda=-\sqrt2\tan(\sqrt2r),\
\mu=0
\]
with some $r\in]0,\pi/\sqrt8[$\:. The corresponding multiplicities are
\[
m(\alpha)=1,\
m(\beta)=2,\
m(\lambda)=2m-2=m(\mu)
\]
and the corresponding eigenspaces are
\begin{align*}
T_\alpha=&\vspan\{\xi\},\  T_\beta=\mathbb C^\perp \xi,\\
T_\lambda=&\{X:X\perp\mathbb H\xi,\ JX=J_1X\},\\
T_\mu=&\{X:X\perp\mathbb H\xi,\ JX=-J_1X\}.
\end{align*}
\end{theorem}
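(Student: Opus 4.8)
The plan is to realize $M$ concretely and to compute its shape operator by Jacobi field theory. By Theorem~\ref{thm:classification} a hypersurface of type $A$ is an open part of a tube of radius $r\in\,]0,\pi/\sqrt8[$ over the totally geodesic $P:=G_2(\mathbb C_{m+1})\subset G_2(\mathbb C_{m+2})$. Fix $o\in M$, let $\gamma$ be the unit-speed normal geodesic with footpoint $p:=\gamma(0)\in P$ and $\gamma(r)=o$, and set $T:=\gamma'$, so that $T=N$ at $o$ up to orientation. The first step is to read off the geometry at $p$: using that $P$ is totally geodesic of real codimension $4$ and that its normal space $\nu_p$ is invariant under both $J$ and $\mathfrak J$, one gets that $\nu_p$ is the quaternionic line $\mathbb HT:=\vspan\{T,JT,J_2T,J_3T\}$ (the smallest $\mathfrak J$-invariant subspace through $T$). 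In particular $JT\in\mathfrak JT$, so after rotating the canonical basis we may take $J_1\in\mathfrak J$ with $J_1T=JT$; transporting along $\gamma$ yields $JN=J_1N$, hence $\xi=-JN=\xi_1\in\mathfrak D^\perp$ on $M$, which is the first assertion of the theorem.

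Since $G_2(\mathbb C_{m+2})$ is a Riemannian symmetric space, $\hat\nabla\hat R=0$, so a normal Jacobi field $Y$ along $\gamma$ obeys $Y''+\hat R_TY=0$ with $\hat R_T:=\hat R(\,\cdot\,,T)T$ a parallel, self-adjoint operator; its eigenvalues and parallel-transported eigenspaces are constant along $\gamma$. The shape operator of the tube at $o$ is then obtained from the Jacobi fields carrying the totally geodesic initial data of $P$: a direction tangent to $P$ at $p$ gives a $\cos$-type field (the second fundamental form of $P$ vanishes), producing the principal curvature $-\sqrt c\,\tan(\sqrt c\,r)$ on an $\hat R_T$-eigenspace with eigenvalue $c>0$ (and curvature $0$ when $c=0$), while a direction normal to $P$ and orthogonal to $T$ gives a $\sin$-type field, producing $\sqrt c\,\cot(\sqrt c\,r)$. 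Thus the whole problem reduces to diagonalizing $\hat R_T$ and sorting its eigenspaces into those tangent and those normal to $P$.

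The computational heart, and the step I expect to be the main obstacle, is the explicit diagonalization of $\hat R_T$ from the curvature formula (\ref{eqn:hatR}) with $Y=Z=T$. Using $\la JT,T\ra=\la J_aT,T\ra=0$ one simplifies the expression, but the delicate point is that here $\mathbb CT\subset\mathbb HT$, so the term $JJ_1T=J(J_1T)=-T$ does contribute (i.e. $\la JJ_1T,T\ra=-1$); dropping it spoils the leading eigenvalue. Carried out carefully, the operator reduces to
\[
\hat R_TX=X-2\la X,T\ra T+6\la X,J_1T\ra J_1T+2\la X,J_2T\ra J_2T+2\la X,J_3T\ra J_3T-JJ_1X,
\]
whence $\hat R_T(JT)=8\,JT$, $\hat R_T(J_2T)=2J_2T$, $\hat R_T(J_3T)=2J_3T$, and on $(\mathbb HT)^\perp$ the operator becomes $X\mapsto X-JJ_1X$, which is $2X$ on $\{X:JX=J_1X\}$ and $0$ on $\{X:JX=-J_1X\}$. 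So $\hat R_T$ has eigenvalues $8,2,2,0$; the eigendirections $JT,J_2T,J_3T$ lie in $\nu_p=\mathbb HT$ (normal to $P$), while the last two eigenspaces lie in $(\mathbb HT)^\perp$ (tangent to $P$).

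Finally I would transport the eigendata to $o$ and match it with the induced contact structure. The normal direction $JT$ (eigenvalue $8$) gives $T_\alpha=\vspan\{\xi\}$ and $\alpha=\sqrt8\cot(\sqrt8r)$; the normal directions $J_2T,J_3T$ (eigenvalue $2$) give $T_\beta=\mathbb C^\perp\xi$ and $\beta=\sqrt2\cot(\sqrt2r)$; the tangential eigenspaces give $T_\lambda=\{X\perp\mathbb H\xi:JX=J_1X\}$ with $\lambda=-\sqrt2\tan(\sqrt2r)$ and $T_\mu=\{X\perp\mathbb H\xi:JX=-J_1X\}$ with $\mu=0$. The multiplicities $1,2,2m-2,2m-2$ follow by counting dimensions, using that $JJ_1$ is a trace-free involution, so its $\pm1$-eigenspaces in $(\mathbb HT)^\perp$ have equal dimension $2m-2$. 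Observing that $\alpha=0=\mu$ exactly when $r=\pi/2\sqrt8$ accounts for the three-versus-four distinct-curvature dichotomy, completing the proof.
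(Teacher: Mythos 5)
Theorem~\ref{thm:A} is quoted in this paper from \cite{berndt-suh} without proof, so there is no internal argument to compare against; your proposal is, however, a correct reconstruction of the original Berndt--Suh proof. Your realization of $M$ as a tube around the totally geodesic $G_2(\mathbb C_{m+1})$, the identification $\nu_p=\mathbb HT$ with $J_1T=JT$ (whence $\xi=\xi_1\in\mathfrak D^\perp$), the diagonalization of the normal Jacobi operator $\hat R_T$ with eigenvalues $8,2,2,0$ --- including the essential non-vanishing term $\la JJ_1T,T\ra=-1$ that a careless skew-symmetry argument would drop --- and the $\cot$/$\tan$ sorting of eigenspaces by normal/tangent position relative to $G_2(\mathbb C_{m+1})$ all match the cited proof in substance and in detail.
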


\begin{theorem}[\cite{berndt-suh}]\label{thm:B}
Let $M$ be a real hypersurface of type $B$ in $G_2(\mathbb C_{m+2})$. 
Then 
$\xi\in\mathfrak D$ at each point of $M$,
$m$ is even, say $m=2n$, and 
$M$ has five distinct constant principal curvatures
\[
\alpha=-2\tan(2r),\ 
\beta=2\cot(2r),\
\gamma=0,\
\lambda=\cot(r),\
\mu=-\tan(r)
\]
with some $r\in]0,\pi/4[$. The corresponding multiplicities are
\[
m(\alpha)=1,\
m(\beta)=3=m(\gamma),\
m(\lambda)=4n-4=m(\mu)
\]
and the corresponding eigenspaces are
\[
T_\alpha=\vspan\{\xi\},\
T_\beta=\mathfrak JJ\xi,\
T_\gamma=\mathfrak J\xi,\
T_\lambda,\
T_\mu,
\]
where $T_\lambda\oplus T_\mu=(\mathbb H\mathbb C\xi)^\perp$, $\mathfrak JT_\lambda=T_\lambda$,
$\mathfrak JT_\mu=T_\mu$, $JT_\lambda=T_\mu$.
\end{theorem}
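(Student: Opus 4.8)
The plan is to compute the principal curvature data of a tube of radius $r$ around the totally geodesic $\mathbb  HP_n\subset G_2(\mathbb C_{2n+2})$ by the Jacobi field method for tubes around a totally geodesic focal submanifold. Fix $p\in\mathbb  HP_n$, a unit normal $v\in\nu_p\mathbb  HP_n$, and the unit-speed normal geodesic $\gamma(t)=\exp_p(tv)$; the tube $M$ meets $\gamma$ at $\gamma(r)$ with unit normal $N=\gamma'(r)$ and Reeb field $\xi=-JN$. Because $G_2(\mathbb C_{m+2})$ is symmetric, $\hat\nabla\hat R=0$, so the normal Jacobi operator $R_v:=\hat R(\cdot,v)v$ has constant eigenvalues along $\gamma$ together with a parallel eigenframe; the Jacobi equation $Y''+R_vY=0$ therefore decouples into scalar ODEs, and I would read off the shape operator $A$ of $M$ at $\gamma(r)$, eigendirection by eigendirection, as the ratio $f'(r)/f(r)$ of the scalar Jacobi solution (up to the single sign fixed by the orientation of $N$).

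The geometric input needed is the behaviour of the embedding under $J$ and $\mathfrak J$: both $T_p\mathbb  HP_n$ and $\nu_p\mathbb  HP_n$ are $\mathfrak J$-invariant, while the Kaehler structure $J$ interchanges them. Hence $v,J_1v,J_2v,J_3v\in\nu_p\mathbb  HP_n$ and $Jv,JJ_1v,JJ_2v,JJ_3v\in T_p\mathbb  HP_n$. In particular $\la v,JJ_av\ra=0$, since $JJ_av=J_aJv$ is tangent while $v$ is normal; this is exactly the type $B$ condition, and together with (\ref{eqn:quarternion}) and (\ref{eqn:JJa}) it makes $\{v,Jv,J_1v,J_2v,J_3v,JJ_1v,JJ_2v,JJ_3v\}$ an orthonormal basis of $\mathbb H\mathbb Cv$. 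From $\la\xi,\xi_a\ra=\la Jv,J_av\ra=0$ one reads off $\xi\in\mathfrak D$, and the mere existence of $\mathbb  HP_n$ forces $m=2n$ even.

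Next I would substitute $X=V$, $Y=Z=v$ into (\ref{eqn:hatR}): apart from the identity term $V$, every remaining summand is a multiple of a projection onto one of the eight frame vectors, which diagonalises $R_v$ as eigenvalue $0$ on $\mathbb Rv\oplus\mathfrak JJv=\vspan\{v,JJ_1v,JJ_2v,JJ_3v\}$, eigenvalue $4$ on $\mathbb RJv\oplus\mathfrak Jv=\vspan\{Jv,J_1v,J_2v,J_3v\}$, and eigenvalue $1$ on $(\mathbb H\mathbb Cv)^\perp$. Crossing this with the tangent/normal splitting of the embedding is the crux: the directions tangent to $\mathbb  HP_n$ give Jacobi fields with $Y(0)\neq0$, $Y'(0)=0$ (so of $\cos(\sqrt\kappa\,t)$-type), namely $Jv$ (eigenvalue $4$), the three $JJ_av$ (eigenvalue $0$) and a $(4n-4)$-dimensional eigenvalue-$1$ space; the directions normal to $\mathbb  HP_n$ and orthogonal to $v$ give fields with $Y(0)=0$, $Y'(0)\neq0$ (of $\sin(\sqrt\kappa\,t)/\sqrt\kappa$-type), namely the three $J_av$ (eigenvalue $4$) and the complementary $(4n-4)$-dimensional eigenvalue-$1$ space, while the radial $v$ parallel-transports to $N$ and is excluded.

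Evaluating $f'(r)/f(r)$ then yields the five values $-2\tan(2r)$ from $Jv$ (so $\xi=-Jv$ spans $T_\alpha$, $m(\alpha)=1$), $2\cot(2r)$ from the $J_av$ (so $T_\beta=\mathfrak JJ\xi$, $m(\beta)=3$), $0$ from the $JJ_av$ (so $T_\gamma=\mathfrak J\xi$, $m(\gamma)=3$), $\cot(r)$ from the normal eigenvalue-$1$ space ($T_\lambda$, $m(\lambda)=4n-4$) and $-\tan(r)$ from the tangent eigenvalue-$1$ space ($T_\mu$, $m(\mu)=4n-4$); these are pairwise distinct for $r\in\,]0,\pi/4[$. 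Since $(\mathbb H\mathbb C\xi)^\perp=(\mathbb H\mathbb Cv)^\perp=T_\lambda\oplus T_\mu$ is both $\mathfrak J$- and $J$-invariant, with $\mathfrak J$ preserving and $J$ interchanging its tangent and normal parts, I obtain $\mathfrak JT_\lambda=T_\lambda$, $\mathfrak JT_\mu=T_\mu$ and $JT_\lambda=T_\mu$. I expect the main obstacle to be twofold: first, rigorously pinning down the $J$- and $\mathfrak J$-behaviour of $T_p\mathbb  HP_n$ and $\nu_p\mathbb  HP_n$ (which rests on the concrete realisation of $\mathbb  HP_n$ as a totally geodesic, quaternionic, $J$-anti-invariant submanifold, via the presentation $Sp(n+1)/(Sp(1)\times Sp(n))\hookrightarrow SU(2n+2)/S(U(2)\times U(2n))$); and second, bookkeeping the focal-point Jacobi machinery with the correct orientation so that the signs of $\alpha,\dots,\mu$ emerge as stated. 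By contrast, the diagonalisation of $R_v$ from (\ref{eqn:hatR}) is entirely routine once the orthonormal frame is in place.
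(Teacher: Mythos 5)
This theorem is quoted in the paper from \cite{berndt-suh} without its proof, and your proposal reconstructs essentially the argument of that source: the Jacobi-field (focal-set) method for tubes around the totally geodesic, $\mathfrak J$-invariant, $J$-anti-invariant $\mathbb{HP}_n$, with the normal Jacobi operator $\hat R(\cdot,v)v$ diagonalized via (\ref{eqn:hatR}) as $0$ on $\mathbb{R}v\oplus\mathfrak J Jv$, $4$ on $\mathbb{R}Jv\oplus\mathfrak J v$, and $1$ on $(\mathbb H\mathbb C v)^{\perp}$, and the principal curvatures read off from $\cos$-type versus $\sin$-type scalar solutions according to the tangent/normal splitting of the focal submanifold. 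Your eigenvalue computation, multiplicities, eigenspace identifications ($T_\beta=\mathfrak JJ\xi$, $T_\gamma=\mathfrak J\xi$, $JT_\lambda=T_\mu$, etc.) and the verification that $\xi\in\mathfrak D$ all check out, so the proposal is correct and takes essentially the same route as the cited proof.
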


\begin{theorem}[\cite{lee-suh}]\label{thm:lee-suh}
Let $M$ be a connected orientable Hopf real hypersurface in
$G_2(\mathbb C_{m+2})$, $m\geq3$. Then the Reeb vector $\xi$ belongs to the distribution $\mathfrak D$ if and
only if M is locally congruent to an open part of a real hypersurface of type $B$.
\end{theorem}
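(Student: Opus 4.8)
The statement is an equivalence, and the forward implication requires no work: if $M$ is an open part of a hypersurface of type $B$, then Theorem~\ref{thm:B} already asserts $\xi\in\mathfrak D$ at every point. The whole content is the converse, so assume $M$ is a connected orientable Hopf hypersurface with $A\xi=\alpha\xi$ and $\xi\in\mathfrak D$ everywhere. The plan is to show that $\mathfrak D^\perp=\vspan\{\xi_1,\xi_2,\xi_3\}$ is invariant under $A$; then, since the Hopf condition already makes $\vspan\{\xi\}$ invariant, Theorem~\ref{thm:classification} applies and forces $M$ to be of type $A$ or type $B$. Type $A$ is excluded because Theorem~\ref{thm:A} gives $\xi\in\mathfrak D^\perp$ for such hypersurfaces, contradicting $\xi\in\mathfrak D$ (as $\xi\neq0$ and $\mathfrak D\cap\mathfrak D^\perp=\{0\}$), which leaves only type $B$.

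Everything thus hinges on proving $A\xi_a\in\mathfrak D^\perp$. Since $\xi\in\mathfrak D$ gives $\phi\xi_a=\phi_a\xi$ and Lemma~\ref{lem:Xua} gives $A(\phi_a\xi)=0$ as a vector-field identity, I would covariantly differentiate $A(\phi_a\xi)=0$ in an arbitrary direction $X$, using the structure formulas for $\nabla_X\phi_a$ and $\nabla_X\xi$ together with $A\xi=\alpha\xi$ and $\eta_a(\xi)=0$. The $q_a$-connection contributions appear only multiplied by $A\phi_{a\pm1}\xi$, which vanish by Lemma~\ref{lem:Xua}, so they disappear for every $X$. Evaluating at $X=\xi$, where in addition $\phi A\xi=\alpha\phi\xi=0$, collapses the remaining term and yields the clean identity $(\nabla_\xi A)(\phi_a\xi)=\alpha A\xi_a$.

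Next I would feed this into the Codazzi equation $(\nabla_X A)Y-(\nabla_Y A)X=(\hat R(X,Y)N)^{\top}$ with $X=\xi$, $Y=\phi_a\xi$. The second term is immediate: $(\nabla_{\phi_a\xi}A)\xi=((\phi_a\xi)\alpha)\xi+\alpha\phi A(\phi_a\xi)-A\phi A(\phi_a\xi)=((\phi_a\xi)\alpha)\xi$, again using $A(\phi_a\xi)=0$. Computing the tangential part of $\hat R(\xi,\phi_a\xi)N$ from (\ref{eqn:hatR}) — where almost every summand drops because $\eta(\phi_a\xi)=0$, $\eta_b(\xi)=0$, $\eta_b(\phi_a\xi)=0$, and $\la\phi_b\xi,\phi_a\xi\ra=\delta_{ab}$ — leaves a nonzero multiple $c\,\xi_a$ of $\xi_a$ (with $c=4$ in the sign convention of (\ref{eqn:hatR})). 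Combining the three pieces gives $\alpha A\xi_a=((\phi_a\xi)\alpha)\xi+c\,\xi_a$. Pairing with $\xi$ and using $\la A\xi_a,\xi\ra=\la\xi_a,A\xi\ra=\alpha\eta(\xi_a)=0$ forces $(\phi_a\xi)\alpha=0$, so $\alpha A\xi_a=c\,\xi_a$. Since $c\neq0$, this simultaneously rules out $\alpha=0$ at any point; hence $A\xi_a=(c/\alpha)\xi_a\in\mathfrak D^\perp$, which is exactly the required invariance.

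The main obstacle is the explicit evaluation of $(\hat R(\xi,\phi_a\xi)N)^{\top}$ from (\ref{eqn:hatR}): one must carefully track the Kaehler term and each of the $J_b$ and $JJ_b$ terms, project onto $TM$, and invoke the orthogonality relations among $\{\xi,\xi_b,\phi\xi_b\}$ together with the identities $JN=-\xi$, $J\xi=N$, $J_bN=-\xi_b$, and $JJ_b=J_bJ$. The sign and exact value of $c$ are convention-dependent, but only the facts that $c$ is a nonzero constant and that the $\xi$-component is removable are needed; everything downstream is a direct appeal to Theorems~\ref{thm:classification},~\ref{thm:A}, and~\ref{thm:B}. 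The one point demanding care in the differentiation step is confirming that the connection $1$-forms $q_a$ drop out, which they do precisely because the surviving terms are proportional to $A\phi_{a\pm1}\xi=0$.
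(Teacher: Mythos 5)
Your proposal is correct, but be aware that there is nothing in this paper to compare it against: Theorem~\ref{thm:lee-suh} is quoted verbatim from the cited paper of Lee and Suh and is used as a black box (in the case $\xi\in\mathfrak D$ of Section 4.2); the paper contains no proof of it. Judged on its own, your argument is sound and is essentially the known strategy of the cited source: reduce everything to proving $A\mathfrak D^\perp\subset\mathfrak D^\perp$, then invoke Theorem~\ref{thm:classification}, and eliminate type $A$ because Theorem~\ref{thm:A} forces $\xi\in\mathfrak D^\perp$ there, while Theorem~\ref{thm:B} disposes of the trivial direction. I verified your two computational pivots. First, differentiating the identity $A\phi_a\xi=0$ of Lemma~\ref{lem:Xua} (using $\phi\xi_a=\phi_a\xi$ and $\eta_a(\xi)=0$) gives $(\nabla_XA)(\phi_a\xi)=\alpha\eta(X)A\xi_a-A\phi_a\phi AX$, the $q_{a\pm1}$-terms indeed dying against $A\phi\xi_{a\pm1}=0$, and at $X=\xi$ the last term vanishes since $\phi A\xi=\alpha\phi\xi=0$, yielding $(\nabla_\xi A)(\phi_a\xi)=\alpha A\xi_a$. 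Second, with $\xi\in\mathfrak D$ the vectors $\xi,\xi_b,\phi\xi_b$ are orthonormal (part (b) of the paper's unnumbered lemma in Section 3), and the tangential part of $\hat R(\xi,\phi_a\xi)N$ computed from (\ref{eqn:hatR}) comes out, term by term, as $\xi_a$ (Kaehler block) $+\,2\xi_a$ (the $-2\la J_b\xi,\phi\xi_a\ra J_bN$ terms) $+\,\xi_a$ (the $JJ_b$ terms, via $\theta_a\xi=-\xi_a$), so your $c=4$ is right, and as you note only $c\neq0$ is needed. The Codazzi equation then gives $\alpha A\xi_a-((\phi_a\xi)\alpha)\xi=4\xi_a$; pairing with $\xi$ kills the gradient term, forces $\alpha\neq0$ at every point, and gives $A\xi_a=(4/\alpha)\xi_a$, which is more than the required invariance (each $\xi_a$ is principal with the same curvature). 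Your mechanics differ slightly from Lee--Suh, who obtain the $\mathfrak D^\perp$-invariance from the general identities one gets by differentiating the Hopf condition $A\xi=\alpha\xi$ itself; your route through the auxiliary identity $A\phi\xi_a=0$ is a little shorter here precisely because Lemma~\ref{lem:Xua} already packages the hypothesis $\xi\in\mathfrak D$. Either way the endgame is identical, and your proof is complete as written.
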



\section{The symmetric tensor fields $\theta_a$}
In this section, we introduce a local symmetric endomorphism $\theta_a$ in $TM$  for real hypersurfaces $M$ in $G_2(\mathbb C_{m+2})$.
With the notion $\theta_a$, some fundamental identities such as the Gauss equation, to certain extent, could be expressed in a comparatively compact form.
Besides, it possesses some nice characteristics, which are crucial in the proof of our main result.

Let $M$ be a real hypersurface in $G_2(\mathbb C_{m+2})$, $m\geq3$.
Corresponding to each canonical local basis $\{J_1,J_2,J_3\}$ of $\mathfrak J$, 
we define a local endomorphism $\theta_a$ on $TM$ by
\begin{align}\label{eqn:theta}
\theta_aX:=\tan(J_aJX)=\phi_a\phi X-\eta(X)\xi_a=\phi\phi_a X-\eta_a(X)\xi.
\end{align}

Let $R$ be the curvature tensor of $M$. It follows from (\ref{eqn:hatR}) that the equation of Gauss is given by
\begin{align*}
&R(X,Y)Z=\la Y,Z\ra X-\la X,Z\ra Y+\la\phi Y,Z\ra\phi X-\la\phi X,Z\ra\phi Y
																																	-2\la\phi X,Y\ra\phi Z	\nonumber\\
&+\sum_{a=1}^3\{\la\phi_aY,Z\ra\phi_aX-\la\phi_aX,Z\ra \phi_aY-2\la\phi_aX,Y\ra\phi_aZ							\nonumber\\
&+\la\theta_aY,Z\ra\theta_aX-\la\theta_aX,Z\ra\theta_aY\}+\la AY,Z\ra AX-\la AX,Z\ra AY.
\end{align*}
for any $X,Y,Z\in TM$.
Next, we derive some properties of $\theta_a$.
\begin{lemma}\label{lem:theta}
\begin{enumerate}
\item[(a)] $\theta_a$ is symmetric,
\item[(b)] $\trace{(\theta_a)}=\eta(\xi_a)$,
\item[(c)] $\theta_a^2X=X-\la X,\phi \xi_a\ra\phi\xi_a$, for all $X\in TM$,
\item[(d)] $\theta_a\xi=-\xi_a; \quad \theta_a\xi_a=-\xi; \quad \theta_a\phi\xi_a=\eta(\xi_a)\phi\xi_a$, 
\item[(e)] $\theta_a\xi_{a+1}= \phi\xi_{a+2}=-\theta_{a+1}\xi_a$,
\item[(f)] $\theta_a\phi\xi_{a+1}=-\xi_{a+2}+\eta(\xi_{a+1})\phi\xi_a$,
\item[(g)] $\theta_{a+1}\phi\xi_a= \xi_{a+2}+\eta(\xi_a)\phi\xi_{a+1}$.
\end{enumerate}
\end{lemma}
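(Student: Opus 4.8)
The plan is to treat parts (a)--(c) conceptually and parts (d)--(g) by direct substitution into the defining identity (\ref{eqn:theta}). The single observation driving the first three parts is that, on the ambient tangent space $T_xG_2(\mathbb C_{m+2})$, the composition $J_aJ$ is a \emph{symmetric involution}: since $J$ and $J_a$ are skew-symmetric and orthogonal, commute by (\ref{eqn:JJa}), and satisfy $J^2=J_a^2=-\mathrm{id}$, one obtains both $\la J_aJX,Y\ra=\la X,J_aJY\ra$ and $(J_aJ)^2=\mathrm{id}$. Writing the ambient vector as $J_aJX=\theta_aX+\la J_aJX,N\ra N$ and reading off from (\ref{eqn:theta}) that $\la J_aJX,N\ra=\eta_a(\phi X)=-\la X,\phi\xi_a\ra$, part (a) follows immediately by restricting the symmetry of $J_aJ$ to tangent $X,Y$, for which the normal component drops out.

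For (c) I would apply $J_aJ$ a second time to the decomposition above. Since $(J_aJ)^2=\mathrm{id}$ yields $J_aJ(\theta_aX)+\la J_aJX,N\ra\,J_aJN=X$, and since $J_aJN=-\phi\xi_a-\eta(\xi_a)N$ (using $\phi_a\xi=\phi\xi_a$), taking the tangential part and inserting $\la J_aJX,N\ra=-\la X,\phi\xi_a\ra$ gives $\theta_a^2X=X-\la X,\phi\xi_a\ra\phi\xi_a$. The one point that needs care is that $\theta_a^2$ is \emph{not} the tangential part of $(J_aJ)^2$: the normal component of $J_aJX$ re-enters through $J_aJN$ and must be carried along, and this bookkeeping is the only mildly delicate step in the whole lemma. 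Part (b) then comes from $\trace(JJ_a)=0$ in (\ref{eqn:JJa}): splitting the ambient trace of $J_aJ$ over an orthonormal frame of $TM$ together with $N$ gives $0=\trace(\theta_a)+\la J_aJN,N\ra=\trace(\theta_a)-\eta(\xi_a)$.

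Finally, (d)--(g) are verified by substituting the relevant frame vectors into (\ref{eqn:theta}) and simplifying with the standard contact identities $\phi\xi=0$, $\phi_a\xi_a=0$, $\eta(\xi)=\eta_a(\xi_a)=1$, $\phi^2X=-X+\eta(X)\xi$, the relation $\phi\xi_a=\phi_a\xi$, and the $3$-structure rules $\phi_a\xi_{a+1}=\xi_{a+2}=-\phi_{a+1}\xi_a$. I would record at the outset that $\xi_1,\xi_2,\xi_3$ are mutually orthogonal, which falls out of $J_aJ_{a+1}=J_{a+2}$ evaluated at $N$ and makes terms such as $\eta_a(\xi_{a+1})$ vanish. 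For example (e) reads $\theta_a\xi_{a+1}=\phi\phi_a\xi_{a+1}-\eta_a(\xi_{a+1})\xi=\phi\xi_{a+2}$, with the companion identity handled symmetrically, while (f) and (g) follow the same template after expanding $\phi^2\xi_{a\pm1}$ and using $\phi_a\xi=\phi\xi_a$. None of (d)--(g) poses any obstacle beyond careful index tracking modulo three.
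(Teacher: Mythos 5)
Your proposal is correct, but for the two substantive parts, (a) and (c), it takes a genuinely different route from the paper. The paper stays entirely on $M$: symmetry in (a) is a one-line computation comparing the two tangential expressions $\theta_a=\phi_a\phi-\xi_a\otimes\eta=\phi\phi_a-\xi\otimes\eta_a$ via skew-symmetry of $\phi$ and $\phi_a$, and (c) is obtained by composing these two expressions, $(\phi\phi_a-\xi\otimes\eta_a)(\phi_a\phi-\xi_a\otimes\eta)X$, and simplifying with the contact identities. You instead lift to the ambient space and exploit that $J_aJ$ is a symmetric involution of $T_xG_2(\mathbb C_{m+2})$, with $\theta_a$ its tangential part: symmetry restricts immediately, and the involution property $(J_aJ)^2=\mathrm{id}$ yields (c) once the normal component is tracked through $J_aJN=-\phi\xi_a-\eta(\xi_a)N$ --- the one delicate point, which you correctly flag and handle (the tangential part of $(J_aJ)^2$ is \emph{not} $\theta_a^2$). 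Your ambient argument is conceptually cleaner and makes transparent why $\theta_a$ has eigenvalues $\pm1$ away from $\phi\xi_a$ (the fact exploited in Lemma~\ref{lem:theta_2}), whereas the paper's computation is more self-contained, using only the induced structure equations on the hypersurface; your part (b) coincides with the paper's (both split the ambient trace of $JJ_a$ over a tangent frame plus $N$), and (d)--(g) are the same direct substitutions in both. Incidentally, your version of (c) also fixes what is evidently a typo in the paper's own calculation, where the final term appears as $\la\phi\xi_a,X\ra\xi_a$ instead of $\la\phi\xi_a,X\ra\phi\xi_a$.
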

\begin{proof}
For any $X,Y\in TM$,
\[
\la\theta_aX,Y\ra-\la X,\theta_aY\ra
=\la\phi\phi_aX,Y\ra-\eta(Y)\eta_a(X)-\la X,\phi_a\phi Y\ra+\eta_a(X)\eta(Y)
=0.
\]
This gives Statement (a).

Let $\{e_1,\cdots,e_{4m-1}\}$ be an orthonormal basis on $T_xM$, $x\in M$. Then
it follows from $\trace(JJ_a)=0$ that 
\begin{align*}
0=\sum_j\la JJ_ae_j,e_j\ra+\la JJ_aN,N\ra=\sum_j\la\theta_ae_j,e_j\ra-\eta(\xi_a)=\trace(\theta_a)-\eta(\xi_a).
\end{align*}
This gives Statement (b).

Statements (c)--(g) can be obtained by direct calculations as below:
\begin{align*}
\theta^2_aX
	=&(\phi\phi_a-\xi\otimes\eta_a)(\phi_a\phi-\xi_a\otimes\eta)X=\phi\phi^2_a\phi X+\eta(X)\xi	\\
	=&-\phi^2X+\eta(X)\xi+\eta_a(\phi X)\phi\xi_a=X-\la\phi\xi_a,X\ra\xi_a;	\\
\theta_a\xi
	=&(\phi_a\phi-\xi_a\otimes\eta)\xi=-\xi_a; \\ 
\theta_a\xi_a
	=&(\phi\phi_a-\xi\otimes\eta_a)\xi_a=-\xi;		\\	
\theta_a\phi\xi_a
	=&(\phi_a\phi-\xi_a\otimes\eta)\phi\xi_a=\phi_a(-\xi_a+\eta(\xi_a)\xi)=\eta(\xi_a)\phi\xi_a;\\	
\theta_a\xi_{a+1}=&(\phi\phi_a-\xi\otimes\eta_a)\xi_{a+1}=\phi\phi_a\xi_{a+1}=\phi\xi_{a+2};\\
\theta_{a+1}\xi_a=&(\phi\phi_{a+1}-\xi\otimes\eta_{a+1})\xi_a=\phi\phi_{a+1}\xi_a=-\xi_{a+2}.
\end{align*}

\end{proof}
For each $x\in M$, we define a subspace $\mathcal H^\perp$ of $T_xM$ by
\[
\mathcal H^\perp:=\vspan\{\xi,\xi_1,\xi_2,\xi_3,\phi\xi_1,\phi\xi_2,\phi\xi_3\}.
\]

Let $\mathcal H$ be the orthogonal complement of $\mathbb H\mathbb C\xi$ in $T_xG_2(\mathbb C_{m+2})$.
Then $T_xM=\mathcal H\oplus\mathcal H^\perp$.
From the above identities, we see that $\mathcal H$ is 
invariant under $\phi$, $\phi_a$ and $\theta_a$.
It follows from Lemma~\ref{lem:theta}(c) that ${\theta_a}_{|{\mathcal H}}$ has two possible eigenvalues: $1$ and $-1$.

Let $\mathcal H_a(\varepsilon)$ be the eigenspace of ${\theta_a}_{|{\mathcal H}}$ corresponding to the eigenvalue $\varepsilon\in\{\pm1\}$.
Since $\theta_a\phi X=-\phi_aX=\phi\theta_a X$, for $X\in\mathcal H$, 
$\mathcal H_a(\varepsilon)$ is $\phi$-invariant and so it is of even dimension.
Moreover, since $\theta_a\theta_bX=-\phi_a\phi_bX=\phi_b\phi_aX=-\theta_b\theta_a X$, for $a\neq b$, $X\in\mathcal H$,  
and each ${\theta_a}_{|\mathcal H}$ is an automorphism in $\mathcal H$,
we see that $\theta_b\mathcal H_a(\varepsilon)=\mathcal H_a(-\varepsilon)$. Hence, each ${\theta_a}_{|\mathcal H}$ has exactly two eigenvalues $\pm1$ and $\dim H_a(1)=\dim H_a(-1)$ is even.

Further, for $X,Y\in\mathcal H_a(\varepsilon)$, $b\neq a$,
since $\theta_bX\in\mathcal H_a(-\varepsilon)$ and $\phi Y\in\mathcal H_a(\varepsilon)$, 
we have $\la\phi_b X,Y\ra=\la\theta_b X,\phi Y\ra=0$, that is, 
$\phi_b\mathcal H_a(\varepsilon)=\mathcal H_a(-\varepsilon)$.
We summarize these observations as below.

\begin{lemma}\label{lem:theta_2}
Let $\mathcal H_a(\varepsilon)$ be the eigenspace corresponds to eigenvalue $\varepsilon$ of 
${\theta_a}_{|\mathcal H}$.
Then 
\begin{enumerate}
\item[(a)] ${\theta_a}_{|\mathcal H}$ has two eigenvalues $\varepsilon=\pm1$,
\item[(b)] $\phi\mathcal H_a(\varepsilon)=\mathcal H_a(\varepsilon)$,
\item[(c)] $\theta_b\mathcal H_a(\varepsilon)=\mathcal H_a(-\varepsilon)$, for $a\neq b$,
\item[(d)] $\dim \mathcal H_a(1)=\dim \mathcal H_a(-1)$ is even, 
\item[(e)] $\phi_b\mathcal H_a(\varepsilon)=\mathcal H_a(-\varepsilon)$, for $a\neq b$.
\end{enumerate}
\end{lemma}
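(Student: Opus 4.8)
The plan is to reduce the entire lemma to elementary linear algebra of a family of symmetric involutions on the single vector space $\mathcal H$, so that parts (a)--(e) all fall out of a few commutation relations. The two structural inputs I would establish first are: that each ${\theta_a}_{|\mathcal H}$ is a symmetric involution (giving the $\pm1$ eigenspace decomposition), and that on $\mathcal H$ the operators $\theta_1,\theta_2,\theta_3$ satisfy Clifford-type relations among themselves and with $\phi$. Once these are in hand, every statement is a one-line consequence.

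First I would record that on $\mathcal H$ the correction terms in the definition of $\theta_a$ disappear. Since $\xi,\xi_a,\phi\xi_a\in\mathcal H^\perp$, for $X\in\mathcal H$ we have $\eta(X)=\eta_a(X)=0$, so (\ref{eqn:theta}) collapses to $\theta_aX=\phi\phi_aX=\phi_a\phi X$. Combined with Lemma~\ref{lem:theta}(c), whose correction term $\la X,\phi\xi_a\ra\phi\xi_a$ also vanishes on $\mathcal H$, this yields $\theta_a^2=\mathrm{id}_{\mathcal H}$; together with Lemma~\ref{lem:theta}(a) this makes ${\theta_a}_{|\mathcal H}$ a symmetric involution, diagonalizable with eigenvalues in $\{\pm1\}$ and an orthogonal decomposition $\mathcal H=\mathcal H_a(1)\oplus\mathcal H_a(-1)$. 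Since $\dim\mathcal H=4m-8\geq4$, the space is nontrivial; that \emph{both} eigenvalues genuinely occur I would defer to the dimension count in (d). From $\theta_aX=\phi\phi_aX=\phi_a\phi X$ I would then read off the two relations that drive everything: $\theta_a\phi=\phi\theta_a=-\phi_a$ on $\mathcal H$, and, using the quaternionic identities $\phi_a\phi_{a+1}=\phi_{a+2}+\cdots$ for the $\phi_a$, the anticommutation $\theta_a\theta_b=-\theta_b\theta_a$ on $\mathcal H$ for $a\neq b$.

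With these relations the remaining parts are immediate. For (b), since $\theta_a$ commutes with $\phi$, applying $\phi$ preserves each eigenvalue, so $\phi\mathcal H_a(\varepsilon)\subseteq\mathcal H_a(\varepsilon)$; as $\phi^2=-\mathrm{id}$ on $\mathcal H$ this is an equality. For (c), if $\theta_aX=\varepsilon X$ then $\theta_a(\theta_bX)=-\theta_b(\theta_aX)=-\varepsilon\,\theta_bX$, so $\theta_b\mathcal H_a(\varepsilon)\subseteq\mathcal H_a(-\varepsilon)$, with equality because $\theta_b$ is an involution. Part (d) then follows: (b) shows each $\mathcal H_a(\varepsilon)$ is $\phi$-invariant hence even-dimensional, while (c) exhibits $\theta_b$ as an isomorphism $\mathcal H_a(1)\to\mathcal H_a(-1)$, forcing equal dimensions (and in particular both eigenspaces nonzero, which completes (a)). For (e) I would compute, for $X,Y\in\mathcal H_a(\varepsilon)$ and $b\neq a$, that $\la\phi_bX,Y\ra=\la\phi\phi_bX,\phi Y\ra=\la\theta_bX,\phi Y\ra$; since $\theta_bX\in\mathcal H_a(-\varepsilon)$ by (c) and $\phi Y\in\mathcal H_a(\varepsilon)$ by (b), orthogonality of the eigenspaces gives $\la\phi_bX,Y\ra=0$, whence $\phi_b\mathcal H_a(\varepsilon)\subseteq\mathcal H_a(-\varepsilon)$, and invertibility of $\phi_b$ on $\mathcal H$ upgrades this to equality.

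The only step with any real content is verifying the anticommutation $\theta_a\theta_b=-\theta_b\theta_a$ on $\mathcal H$, where the quaternionic relations of Section~2 must be invoked and one must check that every $\xi$- and $\xi_a$-valued correction term is annihilated because $\mathcal H\perp\mathcal H^\perp$. This is the hard part in the sense that it is where the geometry, namely the quaternionic Kaehler structure, actually enters; once it is confirmed, the rest is the linear algebra of a pair of anticommuting symmetric involutions intertwined by the complex structure $\phi$.
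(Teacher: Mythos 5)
Your proposal is correct and follows essentially the same route as the paper: the paper likewise restricts $\theta_a=\phi\phi_a=\phi_a\phi$ to $\mathcal H$, uses Lemma~\ref{lem:theta}(c) to get the involution property, the commutation $\theta_a\phi X=-\phi_aX=\phi\theta_aX$ for (b) and evenness, the anticommutation $\theta_a\theta_bX=-\phi_a\phi_bX=\phi_b\phi_aX=-\theta_b\theta_aX$ (with $\theta_b$ an automorphism of $\mathcal H$) for (c), (d) and the existence of both eigenvalues, and the identity $\la\phi_bX,Y\ra=\la\theta_bX,\phi Y\ra$ for (e). The only cosmetic difference is your explicit framing in terms of anticommuting symmetric involutions, which matches the paper's computations step for step.
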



By the properties of $\theta_a$, we have 
\begin{lemma}
\begin{enumerate}
\item[(a)] $\xi\in\mathfrak D^\perp$ if and only if $\dim\mathcal H^\perp=3$.
\item[(b)] If $\xi\in\mathcal D$ then $\xi,\xi_1,\xi_2,\xi_3,\phi\xi_1,\phi\xi_2,\phi\xi_3$ are orthonormal.
\item[(c)] $\xi\notin\mathfrak D^\perp$ if and only if $\dim\mathcal H^\perp=7$.
\end{enumerate}
\end{lemma}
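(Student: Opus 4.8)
The plan is to reduce all three statements to a single structural observation about how the $\mathfrak D$-component of $\xi$ interacts with the maps $\phi_a$, rather than to compute the $7\times 7$ Gram matrix of $\{\xi,\xi_1,\xi_2,\xi_3,\phi\xi_1,\phi\xi_2,\phi\xi_3\}$ head-on. First I would record the elementary relations valid on any such $M$: that $\la\xi_a,\xi_b\ra=\delta_{ab}$, so $\{\xi_1,\xi_2,\xi_3\}$ is an orthonormal basis of $\mathfrak D^\perp$; that $\phi\xi_a=\phi_a\xi$; that $\la\xi,\phi\xi_a\ra=\la\xi,\phi_a\xi\ra=0$ because each $\phi_a$ is skew-symmetric; and the multiplication table $\phi_a\xi_a=0$, $\phi_a\xi_{a+1}=\xi_{a+2}$, $\phi_a\xi_{a+2}=-\xi_{a+1}$ coming from the almost contact $3$-structure relations.

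The key auxiliary fact I would isolate is: for any $v\in\mathfrak D$, the four vectors $v,\phi_1 v,\phi_2 v,\phi_3 v$ are mutually orthogonal, each of norm $|v|$, and all orthogonal to $\mathfrak D^\perp$. Indeed, $|\phi_a v|=|v|$ follows from the compatibility $\la\phi_aX,\phi_aY\ra=\la X,Y\ra-\eta_a(X)\eta_a(Y)$ together with $\eta_a(v)=0$; skew-symmetry of $\phi_a$ gives $\la v,\phi_a v\ra=0$ and, via $\phi_a\phi_{a+1}v=\phi_{a+2}v$ for $v\in\mathfrak D$, the mixed orthogonalities $\la\phi_a v,\phi_b v\ra=0$ for $a\neq b$; and $\la\phi_a v,\xi_c\ra=-\la v,\phi_a\xi_c\ra=0$ since $\phi_a\xi_c\in\mathfrak D^\perp$ and $v\perp\mathfrak D^\perp$.

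With these in hand the statements follow quickly. Writing $\xi=w+\sum_c\eta(\xi_c)\xi_c$ with $w\in\mathfrak D$, I would observe $\xi\equiv w$ and $\phi\xi_a=\phi_a\xi\equiv\phi_a w\pmod{\mathfrak D^\perp}$, so that $\mathcal H^\perp=\vspan\{w,\xi_1,\xi_2,\xi_3,\phi_1 w,\phi_2 w,\phi_3 w\}$. For (b), $\xi\in\mathfrak D$ means $w=\xi$ with $|w|=1$, and the auxiliary fact makes $\xi,\phi\xi_1,\phi\xi_2,\phi\xi_3$ orthonormal and orthogonal to the orthonormal $\xi_1,\xi_2,\xi_3$, so all seven are orthonormal. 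For (a), $\xi\in\mathfrak D^\perp$ forces $w=0$, whence $\phi\xi_a\in\mathfrak D^\perp$ and $\mathcal H^\perp=\mathfrak D^\perp$ has dimension $3$; conversely $\dim\mathcal H^\perp=3$ together with $\{\xi_1,\xi_2,\xi_3\}\subseteq\mathcal H^\perp$ orthonormal forces $\mathcal H^\perp=\mathfrak D^\perp\ni\xi$. For (c), if $\xi\notin\mathfrak D^\perp$ then $w\neq0$, so by the auxiliary fact $w,\phi_1 w,\phi_2 w,\phi_3 w$ are linearly independent and orthogonal to the three-dimensional $\mathfrak D^\perp$, giving $\dim\mathcal H^\perp=7$; the converse is immediate from (a).

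The genuinely nontrivial point, and the main obstacle, is the intermediate case of (c) in which $\xi$ lies neither in $\mathfrak D$ nor in $\mathfrak D^\perp$. A direct assault on the Gram matrix is unpleasant here because the cross terms $\la\xi_a,\phi\xi_b\ra$ and $\la\phi\xi_a,\phi\xi_b\ra$ are nonzero and depend on the components $\eta(\xi_c)$. Passing to the $\mathfrak D$-parts $w,\phi_a w$ modulo $\mathfrak D^\perp$ is exactly what clears these cross terms away, and verifying the orthogonality of $\{w,\phi_1 w,\phi_2 w,\phi_3 w\}$ — especially the mixed terms $\la\phi_a w,\phi_b w\ra$ via $\phi_a\phi_{a+1}v=\phi_{a+2}v$ on $\mathfrak D$ — is the computational heart of the argument.
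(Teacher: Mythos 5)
Your proof is correct, and it takes a genuinely different route from the paper's. The paper stays inside its $\theta_a$-machinery: statement (a) comes from the parity result of Lemma~\ref{lem:theta_2}(d) (which forces $\dim\mathcal H^\perp$ to be odd), statement (b) from the identities $\theta_a\phi\xi_a=\eta(\xi_a)\phi\xi_a$ and $\theta_a\phi\xi_{a+1}=-\xi_{a+2}+\eta(\xi_{a+1})\phi\xi_a$, and statement (c) from a case analysis in which a canonical basis is chosen with $0<\eta(\xi_1)<1$, $\eta(\xi_2)=\eta(\xi_3)=0$, and seven mutually orthogonal eigenvectors of $\theta_1$, namely $\phi\xi_1$, $\xi\pm\xi_1$, $\xi_2\pm\phi\xi_3$, $\xi_3\pm\phi\xi_2$, are exhibited. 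You instead split $\xi=w+\sum_c\eta(\xi_c)\xi_c$ with $w\in\mathfrak D$ and prove the quaternionic orthogonality fact that for $v\in\mathfrak D$ the vectors $v,\phi_1v,\phi_2v,\phi_3v$ are mutually orthogonal of equal length and orthogonal to $\mathfrak D^\perp$; reading the spanning set of $\mathcal H^\perp$ modulo $\mathfrak D^\perp$ then gives $\dim\mathcal H^\perp=3$ or $7$ according as $w=0$ or $w\neq0$, with no case split between $\xi\in\mathfrak D$ and the intermediate position, and no adapted choice of canonical basis. Your route is more elementary and uniform: it uses only the almost contact $3$-structure relations and skew-symmetry of $\phi_a$, avoids Lemma~\ref{lem:theta_2}(d) entirely, and shows directly that the value $5$ never occurs (a fact the paper obtains only implicitly). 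The paper's route is shorter on the page because the $\theta_a$ identities are already established and are reused heavily in later sections, so it fits the paper's toolkit. One detail in your favor: the mixed orthogonality $\la\phi_a w,\phi_b w\ra=0$, which you verify via $\phi_a\phi_{a+1}v=\phi_{a+2}v$ on $\mathfrak D$, is exactly the step the paper compresses into ``Similarly'' in its proof of (b), so your write-up is, if anything, more complete at that point.
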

\begin{proof}
By Lemma~\ref{lem:theta_2}(d), $\mathcal H^\perp$ is of odd dimension. 
Since $\xi_1,\xi_2,\xi_3$ are orthonormal, we obtain $\dim\mathcal H^\perp\in\{3,5,7\}$
and Statement (a).

If $\xi\in\mathfrak D$ then 
\begin{align*}
0=\la\theta_a\phi\xi_a,\phi\xi_{a+1}\ra=\la\phi\xi_a,\theta_a\phi\xi_{a+1}\ra=\la\phi\xi_a,-\xi_{a+2}\ra. 
\end{align*}
Similarly, we also have $\la\phi\xi_a\xi_{a+1}\ra=0$ and we obtained Statement (b).

In view of Statement (b), we only have to verify the case: $\xi\notin\mathfrak D$ and $\xi\notin\mathfrak D^\perp$. 
We select an appropriate canonical local basis $\{J_1,J_2,J_3\}$ $\mathfrak J_{|M}$
such that $0<\eta(\xi_1)<1$, $\eta(\xi_2)=\eta(\xi_3)=0$.
It follows that $\la\xi_2,\phi\xi_3\ra=\eta(\xi_1)\neq0$ and so 
we have the following orthogonal eigenvectors of $\theta_1$: 
\[
\phi\xi_1,
\xi\pm\xi_1,\
\xi_2\pm\phi\xi_3,\
\xi_3\pm\phi\xi_2.
\]
This gives Statement (c).  
\end{proof}

\section{Semi-parallel real hypersurfaces in $G_2(\mathbb C_{m+2})$}

Recall that a tensor field $F$ of type $(1,s)$ of a Riemannian manifold $M$ is said to be \emph{semi-parallel} 
if $R\cdot F=0$, that is, 
\begin{align*}
(R(X,Y)F)(X_1,\cdots,X_s)=
&R(X,Y)F(X_1,\cdots,X_s)	\\
&-\sum_{i=1}^sF(X_1,\cdots,R(X,Y)X_i,\cdots,X_s)=0.
\end{align*}
A real hypersurface $M$ in $G_2(\mathbb C_{m+2})$ is said to be \emph{semi-parallel} if 
$R\cdot A=0$. 

Throughout  this section, we suppose $M$ is a semi-parallel real hypersurface in $G_2(\mathbb C_{m+2})$, $m\geq3$,
and we use the following notations:
\[
\alpha=\la A\xi,\xi\ra,\quad  
\alpha_a=\la A\xi_a,\xi_a\ra, \quad 
u_a=\eta_a(\xi).
\] 

Let $Y,Z\in T_xM$, $x\in M$. 
It follows from $\la (R(\xi,Y)A)Z,\xi\ra=0$ that 
\begin{align}\label{eqn:130}
&\alpha\la AY,AZ\ra+\{1-||A\xi||^2\}\la Y,AZ\ra-\alpha\la Y,Z\ra
		-\la A^2\xi,Z\ra\la A\xi,Y\ra+\la A^2\xi,Y\ra\la A\xi,Z\ra	\nonumber\\
&-\la A\xi,Z\ra\la \xi,Y\ra+\la A\xi,Y\ra\la \xi,Z\ra							
		+\sum_{a=1}^3\{3\la A\phi\xi_a,Z\ra\la\phi\xi_a,Y\ra-\la\phi_a Y,Z\ra\la A\xi,\phi\xi_a\ra \nonumber\\
&-\la\phi_a A\xi,Y\ra\la\phi\xi_a,Z\ra	
		-2\la\phi_a A\xi,Z\ra\la\phi\xi_a,Y\ra-u_a\la A\theta_a Y,Z\ra-\la A\xi_a,Z\ra\la\xi_a,Y\ra
																																								\nonumber\\
&	+\la\theta_aY,Z\ra\la A\xi_a,\xi\ra-\la \theta_aA\xi,Y\ra\la\xi_a,Z\ra\}=0;
\end{align}
By switching $Y$ and $Z$ in this equation, and then subtracting the obtained equation from (\ref{eqn:130}), we obtain
\begin{align}\label{eqn:140}
&-2\la A^2\xi,Z\ra\la A\xi,Y\ra+2\la A^2\xi,Y\ra\la A\xi,Z\ra
		-2\la A\xi,Z\ra\la \xi,Y\ra	+2\la A\xi,Y\ra\la \xi,Z\ra						\nonumber\\
&+\sum_{a=1}^3\{3\la A\phi\xi_a,Z\ra\la\phi\xi_a,Y\ra-3\la A\phi\xi_a,Y\ra\la\phi\xi_a,Z\ra
		-2\la\phi_a Y,Z\ra\la A\xi,\phi\xi_a\ra \nonumber\\
&-\la\phi_a A\xi,Z\ra\la\phi\xi_a,Y\ra+\la\phi_a A\xi,Y\ra\la\phi\xi_a,Z\ra	
		-\la A\xi_a,Z\ra\la\xi_a,Y\ra+\la A\xi_a,Y\ra\la\xi_a,Z\ra	\nonumber\\
&+\la\theta_aA\xi,Z\ra\la\xi_a,Y\ra-\la \theta_aA\xi,Y\ra\la\xi_a,Z\ra							
		-u_a\la A\theta_a Y-\theta_aAY,Z\ra\}=0.
\end{align}
Let $\{e_1,\cdots,e_{4m-1}\}$ be an orthonormal basis on $T_xM$, $x\in M$.  
Then it follows from 
\[
\sum_j\{\la (R(e_j,Y)A)Z,e_j\ra-\la (R(e_j,Z)A)Y,e_j\ra\}=0
\]
that
\begin{align}\label{eqn:120}
&-3\la A\xi,Z\ra\la \xi,Y\ra+3\la A\xi,Y\ra\la \xi,Z\ra
			+\sum_{a=1}^3\{-3\la A\xi_a,Z\ra\la\xi_a,Y\ra+3\la A\xi_a,Y\ra\la\xi_a,Z\ra	\nonumber \\
  &+\la A\phi\xi_a,Z\ra\la\phi\xi_a,Y\ra-\la A\phi\xi_a,Y\ra\la\phi\xi_a,Z\ra+u_a\la A\theta_a Y-\theta_a AY,Z\ra\}=0.
\end{align}
Also, from 
$\la (R(Z,Y)A)\xi,\xi\ra=0$, we have
\begin{align}\label{eqn:160}
&-\la A^2\xi,Z\ra\la A\xi,Y\ra+\la A^2\xi,Y\ra\la A\xi,Z\ra
		-\la A\xi,Z\ra\la \xi,Y\ra+\la A\xi,Y\ra\la \xi,Z\ra							\nonumber\\
&+\sum_{a=1}^3\{-\la\phi_a A\xi,Z\ra\la\phi\xi_a,Y\ra+\la\phi_a A\xi,Y\ra\la\phi\xi_a,Z\ra
		-2\la\phi_a Y,Z\ra\la A\xi,\phi\xi_a\ra 															 			\nonumber\\
&+\la\theta_aA\xi,Z\ra\la\xi_a,Y\ra-\la \theta_aA\xi,Y\ra\la\xi_a,Z\ra\}=0.
\end{align}
$(\ref{eqn:140})-2\times(\ref{eqn:160})$:
\begin{align}\label{eqn:200}
&\sum_{a=1}^3\{3\la A\phi\xi_a,Z\ra\la\phi\xi_a,Y\ra-3\la A\phi\xi_a,Y\ra\la\phi\xi_a,Z\ra
		+2\la\phi_a Y,Z\ra\la A\xi,\phi\xi_a\ra 																		\nonumber\\
&+\la\phi_a A\xi,Z\ra\la\phi\xi_a,Y\ra-\la\phi_a A\xi,Y\ra\la\phi\xi_a,Z\ra	
		-\la A\xi_a,Z\ra\la\xi_a,Y\ra+\la A\xi_a,Y\ra\la\xi_a,Z\ra									\nonumber\\	
&-\la\theta_aA\xi,Z\ra\la\xi_a,Y\ra+\la \theta_aA\xi,Y\ra\la\xi_a,Z\ra
		-u_a\la A\theta_a Y-\theta_aAY,Z\ra\}=0.
\end{align}
$(\ref{eqn:140})+(\ref{eqn:120})-(\ref{eqn:160})$:
\begin{align}\label{eqn:180}
&-\la A^2\xi,Z\ra\la A\xi,Y\ra+\la A^2\xi,Y\ra\la A\xi,Z\ra
		-4\la A\xi,Z\ra\la \xi,Y\ra+4\la A\xi,Y\ra\la \xi,Z\ra							\nonumber\\
&+\sum_{a=1}^3\{4\la A\phi\xi_a,Z\ra\la\phi\xi_a,Y\ra-4\la A\phi\xi_a,Y\ra\la\phi\xi_a,Z\ra	\nonumber\\
&-4\la A\xi_a,Z\ra\la\xi_a,Y\ra+4\la A\xi_a,Y\ra\la\xi_a,Z\ra\}=0.	
\end{align}

Consider two orthonormal principal vectors $Y_j$ and $Y_k$, corresponding to principal curvatures $\lambda_j$ and $\lambda_k$ respectively. Then from 
$\la(R(Y_k,Y_j)A)Y_j,Y_k\ra=0$, we have
\begin{align}\label{eqn:curvature}
&(\lambda_j-\lambda_k)\{\lambda_j\lambda_k+1+3\la Y_k,\phi Y_j\ra^2\}		\nonumber\\
 &+(\lambda_j-\lambda_k)\sum_{a=1}^3\{3\la Y_k,\phi_aY_j\ra^2+\la\theta_aY_j,Y_j\ra\la\theta_aY_k,Y_k\ra
 -\la\theta_aY_k,Y_j\ra^2\}=0.
\end{align} 

Finally, from $\la (R(Z,Y)A)\xi_b,\xi_b\ra=0$, for $b\in\{1,2,3\}$, we have
\begin{align}\label{eqn:700}
&-\la A^2\xi_b,Z\ra\la A\xi_b,Y\ra+\la A^2\xi_b,Y\ra\la A\xi_b,Z\ra
 -\la A  \xi_b,Z\ra\la  \xi_b,Y\ra+\la A  \xi_b,Y\ra\la  \xi_b,Z\ra							\nonumber\\
&+\la\phi A\xi_b,Y\ra\la\phi\xi_b,Z\ra-\la\phi A\xi_b,Z\ra\la\phi\xi_b,Y\ra
		+2\la\phi Z,Y\ra\la A\xi_b,\phi\xi_b\ra																		\nonumber\\		
&+\sum_{a=1}^3\{\la\phi_a A\xi_b,Y\ra\la\phi_a\xi_b,Z\ra-\la\phi_a A\xi_b,Z\ra\la\phi_a\xi_b,Y\ra
		+2\la\phi_a Z,Y\ra\la A\xi_b,\phi_a\xi_b\ra																		\nonumber\\	
&+\la\theta_aA\xi_b,Y\ra\la\theta_a\xi_b,Z\ra
 -\la\theta_aA\xi_b,Z\ra\la\theta_a\xi_b,Y\ra\}=0.
\end{align}

The proof of Theorem~\ref{thm:main} is broken into three steps.
We shall show that these following three cases cannot occur.
\begin{center}
 $\xi\notin\mathfrak D$ and $\xi\notin\mathfrak D$;\quad 
$\xi\in\mathfrak D$; \quad $\xi\in\mathfrak D^\perp$.
\end{center}

\subsection{The case: $\xi\notin\mathfrak D$ and $\xi\notin\mathfrak D^\perp$}
Suppose $\xi\notin\mathfrak D$ and $\xi\notin\mathfrak D^\perp$ at a point $x\in M$.
Without loss of generality, we assume $0<u_1<1$, $u_2=u_3=0$.
\begin{lemma}	\label{lem:1}
$A\xi$, $A\xi_1\in\vspan\{\xi,\xi_1\}$.
\end{lemma}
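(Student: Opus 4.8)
The plan is to show that every component of $A\xi$ and $A\xi_1$ orthogonal to $\vspan\{\xi,\xi_1\}$ vanishes. Using $T_xM=\mathcal H\oplus\mathcal H^\perp$ and the normalization $0<u_1<1$, $u_2=u_3=0$, the orthogonal complement of $\vspan\{\xi,\xi_1\}$ in $T_xM$ is spanned by $\mathcal H$ together with the $\theta_1$-eigenvectors $\phi\xi_1$, $\xi_2\pm\phi\xi_3$, $\xi_3\pm\phi\xi_2$ of $\mathcal H^\perp$ (note $\xi\pm\xi_1$ span $\vspan\{\xi,\xi_1\}$). So it suffices to test $\la A\xi,W\ra$ and $\la A\xi_1,W\ra$ against these directions. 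Throughout I would feed pairs $(Y,Z)$ taken from $\{\xi,\xi_1\}$ and from this frame into the scalar identities \eqref{eqn:120}, \eqref{eqn:200}, \eqref{eqn:180} and \eqref{eqn:700}, using $u_2=u_3=0$ to collapse every $u_a$-weighted sum to its $a=1$ term and Lemma~\ref{lem:theta} to evaluate $\theta_a$, $\phi_a$ and $\phi$ explicitly on the frame.

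First I would dispose of the $\mathcal H$-components. Since $\mathcal H$ is orthogonal to all $\xi_a$, $\phi\xi_a$ and is invariant under $\phi$, $\phi_a$, $\theta_a$, substituting $Y\in\{\xi,\xi_1\}$ and $Z\in\mathcal H$ into the $A^2$-free identities \eqref{eqn:120} and \eqref{eqn:200} makes almost every term drop out, leaving relations in $\la A\xi,Z\ra$ and $\la A\xi_1,Z\ra$ whose coefficients involve $\la\theta_1A\xi,Z\ra=\pm\la A\xi,Z\ra$ according as $Z\in\mathcal H_1(\pm1)$ (Lemma~\ref{lem:theta_2}). Concretely, \eqref{eqn:200} gives a relation of the form $\la A\xi,Z\ra=\mp\la A\xi_1,Z\ra$ on $\mathcal H_1(\pm1)$, while \eqref{eqn:120} supplies a second, independent relation; because $0<u_1<1$ the two are consistent only with $\la A\xi,Z\ra=\la A\xi_1,Z\ra=0$. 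Hence $A\xi,A\xi_1\in\mathcal H^\perp$.

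It remains to kill the off-diagonal $\mathcal H^\perp$-components, and this I expect to be the main obstacle. The difficulty is that \eqref{eqn:120} and \eqref{eqn:200}, when tested against $\phi\xi_1$ or $\xi_2\pm\phi\xi_3$, tend to be vacuous: the first-order identities carry no information in these directions. One is therefore forced to use \eqref{eqn:180} and \eqref{eqn:700} (the latter at $b=1$), in which the genuinely second-order quantities $\la A^2\xi,\cdot\ra$ and $\la A^2\xi_1,\cdot\ra$ appear and couple the unknowns. To remove them I would form the same kind of linear combinations that already produced \eqref{eqn:200} and \eqref{eqn:180}, so that the $A^2$-contributions cancel against one another; if a residual second-order term survives I would eliminate it using the structure equation for $\nabla_X\xi_a$, which upon differentiating the normalization $u_2=u_3=0$ expresses $A\phi\xi_2$, $A\phi\xi_3$ and $A\phi\xi_1$ in terms of the connection forms $q_a$ and $\grad u_1$. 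After these cancellations the surviving equations are linear in the off-diagonal components, with coefficients that are nonzero multiples of $u_1$ and $1-u_1^2$; since $0<u_1<1$, each such component vanishes, and together with the previous step this yields $A\xi,A\xi_1\in\vspan\{\xi,\xi_1\}$.
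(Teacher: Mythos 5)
Your first step is correct and is essentially the paper's own argument: on each eigenspace $\mathcal H_1(\varepsilon)$, $\varepsilon=\pm1$, the linear relations in $\la A\xi,Y\ra$, $\la A\xi_1,Y\ra$ extracted from \eqref{eqn:120} (with $Z=\xi$ and $Z=\xi_1$) and from \eqref{eqn:200} (with $Z=\xi_1$) are independent precisely because $0<u_1<1$, and together they force $A\xi,\ A\xi_1\perp\mathcal H$.

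The second step --- which you yourself identify as the main obstacle --- has a genuine gap. First, your premise that \eqref{eqn:120} and \eqref{eqn:200} are vacuous against $\phi\xi_1$, $\xi_2\pm\phi\xi_3$, $\xi_3\pm\phi\xi_2$ is false: once one knows $\la A\xi,\phi\xi_b\ra=0$, putting $Y=\xi_c$ and then $Y=\phi\xi_b$ into the specializations \eqref{eqn:300} and \eqref{eqn:310} of \eqref{eqn:120} gives, purely linearly and with no $A^2$-terms, $\la A\xi_1,\xi_c\ra=\la A\xi_1,\phi\xi_b\ra=\la A\xi,\xi_c\ra=0$. Second, the one fact that \eqref{eqn:120}/\eqref{eqn:200} do not deliver, namely $\la A\xi,\phi\xi_b\ra=0$, cannot be reached by your scheme: the $A^2$-terms of \eqref{eqn:180} involve $A^2\xi$ while those of \eqref{eqn:700} involve $A^2\xi_b$, so they cannot cancel against one another, and when $Y,Z$ are taken from $\mathcal H^\perp$ these terms are nonlinear in the unknown $\mathcal H^\perp$-components of $A\xi$, $A\xi_b$, $A\phi\xi_b$; your assertion that ``the surviving equations are linear in the off-diagonal components'' is exactly what is not established. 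The idea you are missing is the paper's use of \eqref{eqn:160} tested on pairs $(Z,\phi Z)$ with $Z$ a unit vector in $\mathcal H_b(1)$ --- both test vectors inside $\mathcal H$, not in $\mathcal H^\perp$: there every $A^2$-term carries a factor $\la A\xi,Z\ra$ or $\la A\xi,\phi Z\ra$, which vanishes by your own step one, and the identity collapses to $\sum_{a=1}^3\la\theta_aZ,Z\ra\la A\xi,\phi\xi_a\ra=0$, whence $\la A\xi,\phi\xi_b\ra=0$ by Lemma~\ref{lem:theta_2}(c). Finally, your fallback of differentiating $u_2=u_3=0$ does not close the gap: it only re-parametrizes the unknowns (it yields $\la A\phi\xi_1,X\ra=-\frac12 Xu_1$ and expresses $\la A\phi\xi_2,X\ra$, $\la A\phi\xi_3,X\ra$ as multiples of $q_3(X)$, $q_2(X)$), it forces nothing to vanish, and it moreover requires justifying that the pointwise normalization extends smoothly to a neighborhood, whereas the lemma and the paper's proof of it are purely algebraic at the point $x$.
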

\begin{proof}
We shall first prove that $A\xi$, $A\xi_1\in\mathcal H^\perp$.
By first putting $Z=\xi$, and next $\xi_1$  in (\ref{eqn:120}), we obtain
\begin{align}\label{eqn:300}
-3\alpha\la \xi,Y\ra+3\la A\xi,Y\ra+u_1\la A\xi,\theta_1Y\ra+4u_1\la A\xi_1,Y\ra	\nonumber\\
+\sum_{a=1}^3\{-3\la A\xi,\xi_a\ra\la\xi_a,Y\ra+\la A\xi,\phi\xi_a\ra\la\phi\xi_a,Y\ra\}=0;
\end{align}
\begin{align}\label{eqn:310}
-3\la A\xi,\xi_1\ra\la \xi,Y\ra+4u_1\la A\xi,Y\ra+3\la A\xi_1,Y\ra+u_1\la A\xi_1,\theta_1Y\ra	\nonumber\\
+\sum_{a=1}^3\{-3\la A\xi_1,\xi_a\ra\la\xi_a,Y\ra+\la A\xi_1,\phi\xi_a\ra\la\phi\xi_a,Y\ra\}=0.
\end{align}
By choosing $Y\in\mathcal H_1(1)$ in the above two equations, we have
\begin{align*}
(3+u_1)\la A\xi,Y\ra+4u_1\la A\xi_1,Y\ra =0\\
4u_1\la A\xi,Y\ra+(3+u_1)\la A\xi_1,Y\ra =0.
\end{align*}
It follows from these equations that $A\xi$, $A\xi_1\perp\mathcal H_1(1)$.

Next, if we first put $Y\in\mathcal H_1(-1)$ in (\ref{eqn:300}), followed by $Y\in\mathcal H_1(-1)$ and $Z=\xi_1$ in (\ref{eqn:200}), then
\begin{align*}
(3-u_1)\la A\xi,Y\ra+4u_1\la A\xi_1,Y\ra =0\\
(1+u_1)\{-\la A\xi,Y\ra+\la A\xi_1,Y\ra\} =0.
\end{align*}
Solving these equations, gives $A\xi$, $A\xi_1\perp\mathcal H_1(-1)$ and we conclude that  
\begin{align}\label{eqn:324}
A\xi, A\xi_1\perp\mathcal H.
\end{align}

Secondly, we shall show that $A\xi, A\xi_1\perp\phi\xi_b$, $\xi_c$, for $b\in\{1,2,3\}$, $c\in\{2,3\}$.
Let $Z\in\mathcal H$ and $Y=\phi Z$ in (\ref{eqn:160}). Then $\sum^3_{a=1}\la\theta_a Z,Z\ra\la A\xi,\phi\xi_a\ra=0$.
In particular, if we choose a unit vector $Z\in\mathcal H_b(1)$ then by Lemma~\ref{lem:theta_2}(c), 
$\theta_aZ\in\mathcal H_b(-1)$, which implies that 
$\la\theta_aZ,Z\ra=0$, for $a\neq b$ and so 
\begin{align}\label{eqn:325}
\la A\xi,\phi\xi_b\ra=0, \quad b\in\{1,2,3\}.
\end{align} 
By putting $Y=\xi_c$ in (\ref{eqn:300}), with the help of (\ref{eqn:325}), we have 
$4u_1\la A\xi_1,\xi_c\ra=0$ and so
\begin{align}\label{eqn:326}
\la A\xi_1,\xi_c\ra=0, \quad c\in\{2,3\}.
\end{align}
Next, by using the above two results, after putting $Y=\phi\xi_b$ in (\ref{eqn:310}), we get
\[
(u_1^2+4)\la A\xi_1,\phi\xi_1\ra=4\la A\xi_1,\phi\xi_2\ra=4\la A\xi_1,\phi\xi_3\ra=0
\]
these mean that
\begin{align} \label{eqn:327}
\la A\xi_1,\phi\xi_b\ra=0, \quad b\in\{1,2,3\}.
\end{align}
Similarly, with the helps of (\ref{eqn:325})--(\ref{eqn:327}), it follows from (\ref{eqn:310}) 
that $\la A\xi,\xi_2\ra=\la A\xi,\xi_3\ra=0$.
From this result, together with (\ref{eqn:324})--(\ref{eqn:327}), gives the lemma.
\end{proof}

We define a unit vector $U:=(\xi_1-u_1\xi)/{\sqrt{1-u_1^2}}$.
Note that $\{\xi, U\}$ is an orthonormal basis for $\vspan\{\xi,\xi_1\}$.
From Lemma~\ref{lem:1}, and (\ref{eqn:300}), we have 
\begin{align}\label{eqn:Axi}
A\xi  =&\alpha\xi+\rho U; \quad AU=\rho\xi+\sigma U, \\
A\xi_1=&\alpha\xi_1+\rho\left\{\sqrt{1-u_1^2}\xi-u_1U\right\} 
\label{eqn:Axi1}.
\end{align}
Further, by putting $Z=\xi$ and $Y=U$ in (\ref{eqn:160}), we obtain
\begin{align}\label{eqn:beta2}
\rho(\alpha\sigma-\rho^2)=0.
\end{align}
By virtue of (\ref{eqn:Axi}), if we choose $Z\in\mathcal H$, then (\ref{eqn:180}) gives 
\begin{align*}
\sum_{a=1}^3\{\la A\phi\xi_a,Z\ra \phi\xi_a-\la A\xi_a,Z\ra\xi_a\}=0.	
\end{align*}
Since $\{\xi_1,\xi_2,\xi_3,\phi\xi_1,\phi\xi_2,\phi\xi_3\}$ is linearly independent, 
$\la A\phi\xi_a,Z\ra=\la A\xi_a,Z\ra=0$ and so
$A\xi_a$, $A\phi\xi_a$ $\perp$ $\mathcal H$.
Hence we obtain 
\begin{align}\label{eqn:AH=Ha}
A\mathcal H\subset\mathcal H.
\end{align}
By making use of (\ref{eqn:200}), (\ref{eqn:Axi}) and (\ref{eqn:AH=Ha}), we obtain
\begin{align}\label{eqn:AH=Hb}
A\theta_1Y=\theta_1AY, \quad Y\in\mathcal H. 
\end{align}

\begin{lemma}\label{lem:Ei}
Either all principal curvatures vanish or none of them is zero. 
\end{lemma}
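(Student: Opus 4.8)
The plan is to argue by contradiction: suppose that at the point $x$ the shape operator $A$ admits both a vanishing and a non-vanishing principal curvature, and derive a contradiction from the Gauss--semi-parallel identity (\ref{eqn:curvature}). The starting point is that $A$ respects the orthogonal decomposition
\[
T_xM=\vspan\{\xi,U\}\oplus\mathcal W\oplus\mathcal H_1(1)\oplus\mathcal H_1(-1),
\]
where $\mathcal W$ is the orthogonal complement of $\vspan\{\xi,\xi_1\}$ in $\mathcal H^\perp$: indeed $A\mathcal H\subset\mathcal H$ by (\ref{eqn:AH=Ha}), $A$ preserves $\vspan\{\xi,\xi_1\}=\vspan\{\xi,U\}$ by Lemma~\ref{lem:1}, and $A$ commutes with $\theta_1$ on $\mathcal H$ by (\ref{eqn:AH=Hb}), hence preserves each eigenspace $\mathcal H_1(\pm1)$. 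I would fix an orthonormal eigenbasis of $A$ subordinate to this splitting and feed pairs of eigenvectors with distinct principal curvatures into (\ref{eqn:curvature}).

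First I would record the clean relation inside a single $\theta_1$-eigenspace. For orthonormal principal vectors $Y,W\in\mathcal H_1(\varepsilon)$ with $\lambda_Y\neq\lambda_W$, Lemma~\ref{lem:theta_2} makes nearly every geometric term in (\ref{eqn:curvature}) collapse (one checks $\phi_1 Y=-\varepsilon\phi Y$, while $\theta_a$ and $\phi_a$, $a\neq1$, send $Y,W$ into the opposite eigenspace), leaving
\[
\lambda_Y\lambda_W+2+6\la W,\phi Y\ra^2=0,
\]
so $\lambda_Y\lambda_W\leq-2<0$. Thus a zero principal curvature can never coexist with a different one inside the same $\mathcal H_1(\varepsilon)$; each $A|_{\mathcal H_1(\varepsilon)}$ is therefore either identically zero or free of zero eigenvalues.

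Next I would compare $\mathcal H$ with $\mathcal H^\perp$. For $Y\in\mathcal H_1(\varepsilon)$ principal with curvature $\lambda$ and $v\in\mathcal H^\perp$ principal with curvature $\nu$, $\lambda\neq\nu$, the mutual orthogonality of $\mathcal H$ and $\mathcal H^\perp$ — together with the fact that $\phi$, $\phi_a$ and $\theta_a$ preserve both summands — annihilates all cross terms in (\ref{eqn:curvature}) except the one carrying $\la\theta_1 Y,Y\ra=\varepsilon$, leaving
\[
\lambda\nu+1+\varepsilon\la\theta_1 v,v\ra=0.
\]
If $\nu=0\neq\lambda$ this forces $\la\theta_1 v,v\ra=-\varepsilon$, i.e. $v$ lies in the $(-\varepsilon)$-eigenspace of $\theta_1|_{\mathcal H^\perp}$. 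Since $\dim\mathcal H_1(\pm1)\geq2$ for $m\geq3$, both eigenspaces are present, and this relation immediately disposes of the configurations in which the zero and nonzero curvatures separate cleanly: if nonzero curvatures sit in both $\mathcal H_1(1)$ and $\mathcal H_1(-1)$, any hypothetical zero eigenvector $v\in\mathcal H^\perp$ would be forced into both the $+1$- and the $-1$-eigenspace of $\theta_1$; and if $\mathcal H$ is entirely a zero-eigenspace while $\mathcal H^\perp$ carries a non-vanishing $\nu$, the same vector $v$ is again forced into both eigenspaces. Either way the contradiction shows that in these cases $A$ is either zero throughout or nowhere zero.

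The hard part is the remaining, genuinely mixed configuration: the nonzero curvatures are confined to a single $\theta_1$-eigenspace, say all of $\mathcal H_1(1)$ is a zero-eigenspace while $\mathcal H_1(-1)$ carries a non-vanishing curvature. This is a zero/nonzero coexistence spread across the two $\theta_1$-eigenspaces, which the clean in-eigenspace relation above does not detect, because vectors from $\mathcal H_1(1)$ and $\mathcal H_1(-1)$ lie in different $\theta_1$-eigenspaces and (\ref{eqn:curvature}) only partially simplifies for them: the surviving terms involve $\la W,\phi_a Y\ra$ and $\la\theta_a W,Y\ra$ for $a\in\{2,3\}$, which do not cancel automatically. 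I expect this to be the main obstacle, and I would attack it by coupling that partially reduced identity with the internal structure of $A|_{\mathcal H^\perp}$ — the $2\times2$ block on $\vspan\{\xi,U\}$ constrained by (\ref{eqn:beta2}) and the action of $\theta_1$ on $\mathcal W$ — to pin the admissible eigenvalues of the two $\theta_1$-eigenspaces against each other and against those of $\mathcal H^\perp$, and so reach a contradiction. Everything outside this step is the routine collapse of (\ref{eqn:curvature}) granted by Lemma~\ref{lem:theta_2}.
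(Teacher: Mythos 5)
Your reductions are correct as far as they go: the splitting $T_xM=\vspan\{\xi,U\}\oplus\mathcal W\oplus\mathcal H_1(1)\oplus\mathcal H_1(-1)$ is indeed $A$-invariant, the in-eigenspace identity $\lambda_Y\lambda_W+2+6\la W,\phi Y\ra^2=0$ and the cross identity $\lambda\nu+1+\varepsilon\la\theta_1 v,v\ra=0$ both follow from (\ref{eqn:curvature}) exactly as you say, and they do dispose of the two ``clean'' configurations. But the proof is not complete: the mixed configuration --- all nonzero principal curvatures confined to a single $\theta_1$-eigenspace of $\mathcal H$, say $A|_{\mathcal H_1(1)}=0$ while $A|_{\mathcal H_1(-1)}$ has no zero eigenvalue --- is precisely the hard case of the lemma, and for it you offer only a plan (``I would attack it by coupling \dots and so reach a contradiction''), not an argument. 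Indeed, for $Y\in\mathcal H_1(1)$, $W\in\mathcal H_1(-1)$ with distinct curvatures, (\ref{eqn:curvature}) only reduces to $\sum_{a=2,3}\{3\la W,\phi_aY\ra^2-\la\theta_aW,Y\ra^2\}=0$, which yields no contradiction by itself; so as written the lemma is not proved.

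The ingredient you are missing is the paper's very first step: using (\ref{eqn:180}) together with (\ref{eqn:Axi})--(\ref{eqn:beta2}), one shows that $\phi\xi_1$, equivalently $\phi U$, is a principal vector, $A\phi U=\delta_0\phi U$. This supplies an eigenvector of $A|_{\mathcal H^\perp}$ whose $\theta_1$-diagonal is known and lies strictly between $-1$ and $1$, namely $\la\theta_1\phi U,\phi U\ra=u_1\in(0,1)$. Feeding this pivot into your cross identity kills every configuration at once, including the mixed one: if $\delta_0=0$, pairing $\phi U$ with any nonzero curvature in $\mathcal H_1(\varepsilon)$ gives $1+\varepsilon u_1=0$, impossible, so $A|_{\mathcal H}=0$ and then (as you showed) $A\equiv0$; if $\delta_0\neq0$, pairing $\phi U$ with any zero curvature in $\mathcal H_1(\varepsilon)$ gives the same impossibility, so no principal curvature vanishes. (Alternatively, your own framework can be closed without the pivot: in the mixed case your cross identity forces every eigenvector $v$ of $A|_{\mathcal H^\perp}$ to satisfy $\la\theta_1 v,v\ra=\pm1$, so $\trace(\theta_1|_{\mathcal H^\perp})$ would be an odd integer; but Lemma~\ref{lem:theta}(b) together with $\dim\mathcal H_1(1)=\dim\mathcal H_1(-1)$ gives $\trace(\theta_1|_{\mathcal H^\perp})=u_1\in(0,1)$, a contradiction. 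Some such concrete step is exactly what your proposal lacks.)
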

\begin{proof}
We first show that $\phi\xi_1$ (and so is $\phi U$) is principal.
By putting $Y=\phi\xi_1$ in (\ref{eqn:180}), with the help of (\ref{eqn:Axi})--(\ref{eqn:beta2}), we obtain
\[
(1-u_1^2)\la A\phi\xi_1,Z\ra
+\sum_{a=1}^3\{-\la A\phi\xi_1,\phi\xi_a\ra\la\phi\xi_a,Z\ra+\la A\phi\xi_1,\xi_a\ra\la\xi_a,Z\ra\}=0.
\]
This equation implies that $A\phi\xi_1$ is perpendicular to the vectors 
\[
\xi_2-u_1\phi\xi_3,\
(2-u_1^2)\xi_2-u_1\phi\xi_3;\
\xi_3+u_1\phi\xi_2,\
(2-u_1^2)\xi_3+u_1\phi\xi_2
\]
which deduces that $A\phi\xi_1$ $\perp$ $\xi_2,\xi_3,\phi\xi_2,\phi\xi_3$. This fact, together with 
Lemma~\ref{lem:1} and (\ref{eqn:AH=Ha}), gives $A\phi\xi_1=\delta_0\phi\xi_1$. 

Since $\mathcal H$ is invariant under $A$ and $\theta_1$; and by 
(\ref{eqn:AH=Hb}),
we can construct orthonormal bases $\{X_1,\cdots,X_{4m-8}\}$ for $\mathcal H$ 
and $\{E_0=\phi U,E_1,\cdots,E_6\}$ for $\mathcal H^\perp$
such that 
\begin{align*}
AX_r=&\lambda_r X_r, \quad r\in\{1,\cdots,4m-8\} \nonumber\\
\theta_1X_r=&\left\{\begin{array}{rl}
X_r, & r\in\{1,\cdots,2m-4\}; \\
-X_r, & r\in\{2m-3,\cdots,4m-8\}.\end{array}\right.	\nonumber\\
AE_i=&\delta_iE_i, \quad i\in\{0,\cdots,6\}.
\end{align*}

By putting $Y_j=E_i$ and $Y_k=X_r$ 
in (\ref{eqn:curvature}),
since $\la\theta_bX_r,X_r\ra=0$, for $b\in\{2,3\}$,
we obtain
\begin{align}\label{eqn:a}
(\delta_i-\lambda_r)\{\delta_i\lambda_r+1+\la\theta_1X_r,X_r\ra\la\theta_1E_i,E_i\ra\}=0.
\end{align}

We consider two cases: $\delta_0=0$; and $\delta_0\neq0$.

\medskip
\textbf{Case 1.} $\delta_0=0$.

We set $i=0$ in (\ref{eqn:a}) to get
$
0=\lambda_r(1\pm\la\theta_1\phi U,\phi U\ra)=\lambda_r(1\pm u_1).
$ 
Hence $\lambda_r=0$, for $r=\{1,\cdots,4m-8\}$ and 
(\ref{eqn:a}) reduces to 
\[
\delta_i(1+\la\theta_1X_r,X_r\ra\la\theta_1E_i,E_i\ra)=0. 
\]
Since $\la\theta_1X_1,X_1\ra=1$ and $\la\theta_1X_{4m-8},X_{4m-8}\ra=-1$, 
all $\delta_i=0$ and so all principal curvatures are zero.

\medskip
\textbf{Case 2.} $\delta_0\neq0$.

We first claim that all $\lambda_r\neq0$.
Suppose to the contrary that $\lambda_s=0$, for some $s\in\{1,$ $\cdots,4m-8\}$. 
We set $i=0$ and $r=s$ in (\ref{eqn:a}) to get
$0=1\pm\la\theta_1\phi U,\phi U\ra=1\pm u_1$. 
This is a contradiction. We conclude that $\lambda_r\neq0$, for $r\in\{1,\cdots,4m-8\}$.

Next, we claim that all $\delta_i\neq0$. For otherwise, we can set  
$r=1$ and $r=4m-8$ respectively in (\ref{eqn:a}) to obtain a contradiction.
This completes the proof. 
\end{proof}
\begin{lemma}\label{lem:xi=principal}
$A\xi=\alpha\xi$ and  $A\xi_a=\alpha_a\xi_a$, for  $a\in\{1,2,3\}$.
\end{lemma}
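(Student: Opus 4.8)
The plan is to first prove that $\rho=0$, which at once delivers the assertions for $\xi$ and $\xi_1$, and then to treat $\xi_2,\xi_3$ by a separate argument. For the first part I would use the factored identity (\ref{eqn:beta2}), namely $\rho(\alpha\sigma-\rho^2)=0$. If $\rho=0$ there is nothing to do in this step; otherwise $\alpha\sigma-\rho^2=0$. By (\ref{eqn:Axi}) the subspace $\vspan\{\xi,U\}$ is invariant under $A$, and in the orthonormal basis $\{\xi,U\}$ the operator $A$ restricts to the symmetric matrix $\begin{pmatrix}\alpha&\rho\\ \rho&\sigma\end{pmatrix}$, whose two eigenvalues are genuine principal curvatures of $M$. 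Their product equals $\alpha\sigma-\rho^2=0$, so $A$ has a vanishing principal curvature at $x$. By Lemma~\ref{lem:Ei} all principal curvatures must then vanish, whence $A=0$ at $x$ and in particular $\rho=\la A\xi,U\ra=0$, contradicting $\rho\neq0$. Therefore $\rho=0$.

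With $\rho=0$, formulas (\ref{eqn:Axi}) and (\ref{eqn:Axi1}) give $A\xi=\alpha\xi$, $A\xi_1=\alpha\xi_1$ and $AU=\sigma U$; since $U\in\vspan\{\xi,\xi_1\}$ this forces $\sigma=\alpha$, so $A\xi_1=\alpha_1\xi_1$ with $\alpha_1=\alpha$, and $M$ is Hopf at $x$. Moreover the proof of Lemma~\ref{lem:Ei} already shows $\phi\xi_1$ is principal, say $A\phi\xi_1=\delta_0\phi\xi_1$. It remains to establish $A\xi_2=\alpha_2\xi_2$ and $A\xi_3=\alpha_3\xi_3$. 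I would set $V:=\vspan\{\xi_2,\xi_3,\phi\xi_2,\phi\xi_3\}$ and first check, using $u_2=u_3=0$ together with the identities of Lemma~\ref{lem:theta} and the structure relations, that $V$ is orthogonal to $\vspan\{\xi,\xi_1,\phi\xi_1\}$. Since $A\xi=\alpha\xi$, $A\xi_1=\alpha\xi_1$ and $A\phi\xi_1=\delta_0\phi\xi_1$, each of $A\xi_2$ and $A\xi_3$ is orthogonal to $\xi,\xi_1,\phi\xi_1$, and combining this with $A\mathcal H\subset\mathcal H$ from (\ref{eqn:AH=Ha}) shows $A\xi_2,A\xi_3\in V$; that is, $V$ is $A$-invariant.

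The remaining, and hardest, task is to show that inside this $4$-dimensional $A$-invariant space the specific vectors $\xi_2,\xi_3$ are eigenvectors. The difficulty is that the natural frame $\{\xi_2,\xi_3,\phi\xi_2,\phi\xi_3\}$ is not orthonormal (indeed $\la\xi_2,\phi\xi_3\ra=u_1\neq0$) and that, in contrast to the situation on $\mathcal H$ in (\ref{eqn:AH=Hb}), there is no a priori reason for $A$ to commute with $\theta_1$ on $V$. My plan is to substitute $Y,Z\in V$ into the simplified forms of (\ref{eqn:180}) and (\ref{eqn:200}): using $A\xi=\alpha\xi$ and $u_2=u_3=0$, these collapse to relations involving only the $a=2,3$ terms together with the commutator $A\theta_1-\theta_1A$ restricted to $V$. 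I would then combine them with (\ref{eqn:700}) taken for $b=2,3$, and with the curvature identity (\ref{eqn:curvature}) applied to a principal frame of $V$, to force the off-diagonal quantities $\la A\xi_2,\xi_3\ra$, $\la A\xi_2,\phi\xi_2\ra$, $\la A\xi_2,\phi\xi_3\ra$ and their $\xi_3$-analogues to vanish. I expect this $V$-computation to be the main obstacle, since it is precisely where all of the $\theta_a$- and $\phi_a$-identities must be deployed at once and where the non-orthonormality of the frame makes the bookkeeping delicate.
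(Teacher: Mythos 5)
Your treatment of $\xi$ and $\xi_1$ is correct and is essentially the paper's own argument: by (\ref{eqn:beta2}), if $\rho\neq0$ then $\alpha\sigma=\rho^2$, so the symmetric restriction of $A$ to the $A$-invariant plane $\vspan\{\xi,U\}$ has determinant zero, and Lemma~\ref{lem:Ei} then forces a contradiction (the paper phrases it by exhibiting the eigenvectors $\rho\xi-\alpha U$ and $\alpha\xi+\rho U$ with eigenvalues $0$ and $\alpha+\sigma\neq0$, you phrase it via $A=0$; both work). The genuine gap is the second half, for $\xi_2$ and $\xi_3$: there you offer only a plan --- ``substitute into (\ref{eqn:180}), (\ref{eqn:200}), (\ref{eqn:700}), (\ref{eqn:curvature}) and force the off-diagonal quantities to vanish'' --- with no actual derivation, and you yourself flag this as the main obstacle. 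Since this is where the real content of the lemma lies, the proposal as it stands does not prove the statement.

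The paper closes this gap not by a computation on your $4$-plane $V$ but by two targeted substitutions into (\ref{eqn:700}), after which your own $2\times2$ trick finishes the job. First, take a unit $Y\in\mathcal H_1(\varepsilon)$, $\varepsilon=\pm1$, and $Z=\phi Y$: since $A\mathcal H\subset\mathcal H$ gives $A\xi_b,A^2\xi_b\in\mathcal H^\perp$, and $\mathcal H^\perp$ is stable under $\phi$, $\phi_a$, $\theta_a$, every term pairing such vectors against $Y,Z\in\mathcal H$ vanishes, leaving $-\la A\xi_b,\phi\xi_b\ra+\varepsilon\la A\xi_b,\phi_1\xi_b\ra=0$ (the $a=2,3$ terms drop because $\la\theta_aY,Y\ra=0$ by Lemma~\ref{lem:theta_2}(c)); the two choices of $\varepsilon$ give $\la A\xi_2,\phi\xi_2\ra=\la A\xi_3,\phi\xi_3\ra=\la A\xi_2,\xi_3\ra=0$, hence $A\xi_b=\alpha_b\xi_b+\rho_bU_b$ with $U_2=(\phi\xi_3-u_1\xi_2)/\sqrt{1-u_1^2}$ and $U_3=(\phi\xi_2+u_1\xi_3)/\sqrt{1-u_1^2}$. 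Second, put $Z=U_b$ in (\ref{eqn:700}) to get $\rho_b\{\rho_bAU_b-\sigma_bA\xi_b\}=0$; if $\rho_b\neq0$ then $\vspan\{\xi_b,U_b\}$ is $A$-invariant with $\alpha_b\sigma_b=\rho_b^2$, and Lemma~\ref{lem:Ei} yields the same contradiction as before, so $\rho_b=0$. Note also that one of the three quantities you propose to kill, $\la A\xi_2,\phi\xi_3\ra$, is the wrong target: since $\la\xi_2,\phi\xi_3\ra=u_1\neq0$, the desired conclusion $A\xi_2=\alpha_2\xi_2$ gives $\la A\xi_2,\phi\xi_3\ra=\alpha_2u_1$, which need not vanish; the correct quantity is $\la A\xi_2,U_2\ra$, the component along the unit vector orthogonal to $\xi_2$ inside $\vspan\{\xi_2,\phi\xi_3\}$. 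This is precisely the point where the non-orthonormality you worried about must be handled, and the paper handles it by introducing $U_b$; the machinery of (\ref{eqn:180}), (\ref{eqn:200}) and (\ref{eqn:curvature}) on $V$ is not needed for this step.
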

\begin{proof}
Suppose $\xi$ is not principal or $\rho\neq0$. 
In view of (\ref{eqn:beta2}), $\alpha\sigma=\rho^2$ and so $\alpha+\sigma\neq0$. Further, we can verify that 
$A(\rho\xi-\alpha U)=0$ and $A(\alpha\xi+\rho U)=(\alpha+\sigma)(\alpha\xi+\rho U)$.
But this contradicts Lemma~\ref{lem:Ei}, hence we 
conclude that $A\xi=\alpha\xi$ and $\rho=0$.
From (\ref{eqn:Axi1}), we can see that $\xi_1$ is also a principal vector.

Next, fixed $b\in\{2,3\}$.
Let $Y$ be a unit vector in $\mathcal H$ and $Z=\phi Y$ in (\ref{eqn:700}). Then 
\[
-\la A\xi_b,\phi\xi_b\ra+\la\theta_1Y,Y\ra\la A\xi_b,\phi_1\xi_b\ra=0.
\]
By first putting $Y\in\mathcal H_1(-1)$, followed by $Y\in\mathcal H_1(1)$,
we obtain $\la A\xi_b,\phi\xi_b\ra=\la A\xi_b,\phi_1\xi_b\ra=0$, more precisely
$\la A\xi_2,\phi\xi_2\ra=\la A\xi_3,\phi\xi_3\ra=\la A\xi_2,\xi_3\ra=0$.
It follows that we may write
\begin{align}\label{eqn:Axi2}
A\xi_b=\alpha_b\xi_b+\rho_bU_b, \quad 
U_2:=\dfrac{\phi\xi_3-u_1\xi_2}{\sqrt{1-u_1^2}},\quad
U_3:=\dfrac{\phi\xi_2+u_1\xi_3}{\sqrt{1-u_1^2}}.
\end{align}
In view of the above equation, after putting $Z=U_b$ in (\ref{eqn:700}), we have
\begin{align*}
\rho_b\{\rho_bAU_b-\sigma_bA\xi_b\}=0,\quad (\sigma_b:=\la AU_b,U_b\ra).
\end{align*}
From this equation, we can see that $\xi_b$ is principal, for $b\in\{2,3\}$.
Indeed, if $\xi_b$ is not principal or equivalently $\rho_b\neq0$ then the above equation implies that 
\begin{align}\label{eqn:AU2}
AU_b=\rho_b\xi_b+\sigma_bU_b; \quad \alpha_b\sigma_b=\rho_b^2.
\end{align}
It follows from (\ref{eqn:Axi2}) and (\ref{eqn:AU2}) that  
$\rho_b\xi_b-\alpha_bU_b$ and $\alpha_b\xi_b+\rho_bU_b$ are principal vectors correspond to the principal curvatures $0$ and $\alpha_b+\sigma_b\neq0$ respectively.
This contradicts Lemma~\ref{lem:Ei},
hence $\xi_b$ is principal, for $b\in\{2,3\}$.
\end{proof}

\begin{lemma}\label{lem:case1}
Let $M$ be a semi-parallel real hypersurface in $G_2(\mathbb C_{m+2})$, $m\geq3$. 
Then either $\xi\in\mathfrak D$ or $\xi\in\mathcal D^\perp$ at each point $x\in M$. 
\end{lemma}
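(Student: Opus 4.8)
The plan is to argue by contradiction from the standing assumption of this subsection: with $\xi\notin\mathfrak D$ and $\xi\notin\mathfrak D^\perp$ at a point $x$ (so $0<u_1<1$, $u_2=u_3=0$), I would show the accumulated structure is inconsistent. By Lemmas~\ref{lem:1}--\ref{lem:xi=principal} the hypersurface is Hopf at $x$, with $A\xi=\alpha\xi$, $A\xi_a=\alpha_a\xi_a$, $A\phi U=\delta_0\phi U$, $A\mathcal H\subseteq\mathcal H$, $A\theta_1=\theta_1A$ on $\mathcal H$, and $\alpha=\alpha_1=\sigma$; moreover Lemma~\ref{lem:Ei} gives that either $A=0$ or no principal curvature vanishes. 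Using the simultaneous $A$--$\theta_1$ eigenbases $\{X_1,\dots,X_{4m-8}\}$ of $\mathcal H$ and $\{E_0=\phi U,E_1,\dots,E_6\}$ of $\mathcal H^\perp$ constructed in the proof of Lemma~\ref{lem:Ei}, I would feed pairs of principal vectors into the curvature identity (\ref{eqn:curvature}) and its contracted form (\ref{eqn:a}).

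First I would record, via Lemmas~\ref{lem:theta} and \ref{lem:theta_2}, the two ingredients that drive everything: the quantity $\langle\theta_1E,E\rangle$ equals $-u_1$ for $E=\xi$ but $+u_1$ for $E=\phi U$ (and $\pm u_1$ for $\xi_2,\xi_3,U_2,U_3$), while (\ref{eqn:curvature}) produces on the relevant $\mathcal H^\perp$-pairs the brackets $\alpha\delta_0+4(1-u_1^2)$ for $(\xi,\phi U)$ and $\alpha\alpha_2+2u_1^2$ for $(\xi,\xi_2)$. These coefficient computations are routine but lengthy $\theta_a,\phi_a$ bookkeeping. The decisive move is to pair both $\xi$ and $\phi U$ with the \emph{same} $X_r\in\mathcal H_1(1)$ and the same $X_s\in\mathcal H_1(-1)$: since $\xi$ and $\phi U$ carry opposite $\theta_1$-signs, (\ref{eqn:a}) forces for each such $X_r$ the alternatives $\alpha=\lambda_r$ or $\alpha\lambda_r=-(1-u_1)$ (from $\xi$) against $\delta_0=\lambda_r$ or $\delta_0\lambda_r=-(1+u_1)$ (from $\phi U$).

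I would then split on whether $\alpha=\delta_0$. If $\alpha\neq\delta_0$, comparing these product relations together with the $\mathcal H^\perp$-bracket $\alpha\delta_0=-4(1-u_1^2)$ eliminates all but one subcase, pinning $\lambda_r=\alpha$ on $\mathcal H_1(1)$ and $\lambda_s=\delta_0$ on $\mathcal H_1(-1)$ and fixing $u_1$ to a single numerical value; feeding $\xi_2$ against these $X_r,X_s$ and against $\xi,\phi U$ then forces simultaneously $\alpha_2=\alpha$ and $\alpha_2=\delta_0$, i.e.\ $\alpha=\delta_0$, a contradiction. If instead $\alpha=\delta_0$, the same pairings (again via the opposite $\theta_1$-signs) immediately yield $u_1=0$ unless every $\lambda_r$ and every $\mathcal H^\perp$-eigenvalue equals $\alpha$, so that $M$ is totally umbilical, $A=\alpha I$ (with $\alpha=0$ covering the case $A=0$ of Lemma~\ref{lem:Ei}).

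The main obstacle is precisely this last case. On a totally umbilical hypersurface $R\cdot A=[R(\cdot,\cdot),\alpha I]=0$ holds identically, so the semi-parallel hypothesis becomes vacuous and none of the identities derived above can exclude it; an ingredient outside the equation $R\cdot A=0$ is therefore unavoidable. To close it I would invoke the Codazzi equation $(\nabla_XA)Y-(\nabla_YA)X=(\hat R(X,Y)N)^\top$: for $A=\alpha I$ the left-hand side reduces to $(X\alpha)Y-(Y\alpha)X$, whereas computing the right-hand side from (\ref{eqn:hatR}) with $X=\xi$ and $Y\in\mathcal H$ yields tangential part $-\phi Y-u_1\phi_1 Y$, which is not of that form; equivalently, one cites the nonexistence of totally umbilical (in particular totally geodesic) real hypersurfaces in $G_2(\mathbb C_{m+2})$. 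This gives the final contradiction, so the case $\xi\notin\mathfrak D$, $\xi\notin\mathfrak D^\perp$ cannot occur and Lemma~\ref{lem:case1} follows.
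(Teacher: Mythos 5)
Your proposal is correct, but it follows a genuinely different route from the paper. The paper's own proof is three lines long: on the open set $M_0=\{x\in M:\ u_1^2+u_2^2+u_3^2\notin\{0,1\}\}$, Lemma~\ref{lem:xi=principal} gives $A\xi=\alpha\xi$ and $A\mathfrak D^\perp\subset\mathfrak D^\perp$, so each component of $M_0$ is an open part of a type $A$ or type $B$ hypersurface by Theorem~\ref{thm:classification}; since those have $\xi\in\mathfrak D^\perp$ or $\xi\in\mathfrak D$ at every point (Theorems~\ref{thm:A} and~\ref{thm:B}), $M_0$ must be empty. You avoid the classification theorem and push the pointwise identities to the end instead. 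Your key quantities check out ($\la\theta_1\xi,\xi\ra=-u_1$, $\la\theta_1\phi U,\phi U\ra=+u_1$, and the brackets $\alpha\delta_0+4(1-u_1^2)$ and $\alpha\alpha_2+2u_1^2$ are correct), and your two-sided pairing argument does force $A=\alpha I$ at every point of $M_0$: in the branch $\alpha\neq\delta_0$ one gets $u_1=3/4$, $\alpha\delta_0=-7/4$, $\lambda_r=\alpha$ on $\mathcal H_1(1)$ and $\lambda_s=\delta_0$ on $\mathcal H_1(-1)$, after which the four constraints on $\alpha_2$ are mutually inconsistent, as you indicate. A shortcut worth noting: $U=(\xi_1-u_1\xi)/\sqrt{1-u_1^2}$ is also an $\alpha$-eigenvector but has $\la\theta_1U,U\ra=+u_1$, so pairing $\xi$ and $U$ against the same $X_r$ in (\ref{eqn:a}) forces $\lambda_r=\alpha$ on all of $\mathcal H$ at once, eliminating the case split on $\delta_0$ entirely. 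Your sharpest observation is the diagnosis that $R\cdot A=0$ holds identically at umbilical points, so no consequence of semi-parallelism can finish the proof; the Codazzi computation you supply does close this gap (for $Y\in\mathcal H$ the tangential part of $\hat R(\xi,Y)N$ is indeed $-\phi Y-u_1\phi_1Y$, a nonzero vector orthogonal to $\vspan\{Y,\xi\}$, hence never of the form $(\xi\alpha)Y-(Y\alpha)\xi$). Trade-off: the paper's route costs essentially nothing, since Theorem~\ref{thm:classification} must be invoked again in Section 4.4 anyway; your route is self-contained at this step and makes explicit the structural reason an ingredient beyond $R\cdot A=0$ (Codazzi, equivalently nonexistence of umbilical hypersurfaces) is unavoidable, at the price of a longer eigenvalue analysis.
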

\begin{proof} 
Consider the open subset
\[
M_0:=\{x\in M:~ g(x):=u_1^2+u_2^2+u_3^2\notin\{0,1\}\}.
\]
Then by Lemma~\ref{lem:xi=principal}, we have $A\xi=\alpha\xi$ and 
$A\mathfrak D^\perp\subset\mathfrak D^\perp$ on $M_0$. 
In view of Theorem~\ref{thm:classification}--\ref{thm:B}, $M_0$ is an open part of a real hypersurface of type $A$ or $B$ and either $\xi\in\mathfrak D$ or $\xi\in\mathfrak D^\perp$ at each point in $M_0$. Hence $M_0$ is empty and this completes the proof.
\end{proof}

\subsection{The case: $\xi\in\mathcal D$}

Suppose $\xi\in\mathfrak D$ at each point $x\in M$. 
Then the vectors $\xi,\xi_1,\xi_2, \xi_3, \phi\xi_1, \phi\xi_2, \phi\xi_3$ are orthonormal and each $u_a=0$.
By using Lemma~\ref{lem:Xua}, we have
\[
0=\la(R(\phi\xi_1,\xi)A)Z,\phi\xi_1\ra=
\la A\xi,Z\ra+3\sum_{a=1}^3\la A\phi_a\phi\xi_1,Z\ra\la\phi_a\phi\xi_1,\xi\ra=4\la A\xi,Z\ra. 
\]
Hence $A\xi=0$. By Theorem~\ref{thm:lee-suh}, $M$ is an open part of a real hypersurface of type $B$.
This is a contradiction as $\alpha\neq0$ according to Theorem~\ref{thm:B}.
Hence we obtain the following lemma.

\begin{lemma}\label{lem:case2}
Let $M$ be a semi-parallel real hypersurface in $G_2(\mathbb C_{m+2})$, $m\geq3$. 
Then $\xi\in\mathcal D^\perp$ at each point $x\in M$. 
\end{lemma}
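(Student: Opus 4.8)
The plan is to reduce to the case $\xi\in\mathfrak D$ and derive a contradiction. Lemma~\ref{lem:case1} already tells us that at each point of $M$ either $\xi\in\mathfrak D$ or $\xi\in\mathfrak D^\perp$, so the whole statement will follow once I rule out the possibility $\xi\in\mathfrak D$. Assuming $\xi\in\mathfrak D$ at each point, I would first record the structural consequences: $u_a=\eta_a(\xi)=0$ for $a\in\{1,2,3\}$, the seven vectors $\xi,\xi_1,\xi_2,\xi_3,\phi\xi_1,\phi\xi_2,\phi\xi_3$ spanning $\mathcal H^\perp$ are orthonormal, and, crucially, Lemma~\ref{lem:Xua} applies to give $A\phi\xi_a=0$.

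The heart of the argument is to feed a well-chosen pair of vectors into the semi-parallel identity $R\cdot A=0$. The natural candidates are the orthonormal pair $(\phi\xi_1,\xi)$, tested against $\phi\xi_1$ itself; that is, I would examine $\la(R(\phi\xi_1,\xi)A)Z,\phi\xi_1\ra=0$ for arbitrary $Z\in T_xM$. Writing $(R(X,Y)A)Z=R(X,Y)AZ-AR(X,Y)Z$ and using that $A$ is self-adjoint together with $A\phi\xi_1=0$ annihilates the second piece, so the identity reduces to $\la R(\phi\xi_1,\xi)AZ,\phi\xi_1\ra=0$. Substituting the Gauss equation and exploiting the orthonormality relations (in particular $\phi\xi=0$, $\phi\phi\xi_1=-\xi_1$, and the $\phi_a\phi\xi_1$, $\theta_a\phi\xi_1$ identities of Lemma~\ref{lem:theta}, which make all contributions with $a\neq1$ drop out) should collapse the whole expression to the single curvature value $R(\phi\xi_1,\xi)\phi\xi_1=-4\xi$; feeding this back yields $4\la A\xi,Z\ra=0$ for every $Z$, and hence $A\xi=0$.

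With $A\xi=0$ in hand, $\xi$ is a principal direction with principal curvature $\alpha=\la A\xi,\xi\ra=0$, so $M$ is a Hopf hypersurface. Since we are assuming $\xi\in\mathfrak D$, Theorem~\ref{thm:lee-suh} forces $M$ to be locally congruent to an open part of a real hypersurface of type $B$. But Theorem~\ref{thm:B} lists the principal curvature $\alpha=-2\tan(2r)$ with $r\in\,]0,\pi/4[$, so $\alpha\neq0$ there, contradicting $\alpha=0$. This rules out $\xi\in\mathfrak D$, and combined with Lemma~\ref{lem:case1} it gives $\xi\in\mathfrak D^\perp$ at every point, as desired.

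The step I expect to be the main obstacle is the curvature computation in the second paragraph. The Gauss expression for $R$ is long, and one must check carefully which of its Kaehler and quaternionic terms survive after imposing $A\phi\xi_a=0$ and the orthonormality of the frame; the bookkeeping of the $\phi_a\phi\xi_1$ and $\theta_a\phi\xi_1$ terms, together with the factor attached to the $\la\phi_aX,Y\ra\phi_aZ$ summands, is exactly what produces the clean coefficient $4$. Once that single scalar identity is established, the classification Theorems~\ref{thm:lee-suh} and~\ref{thm:B} finish the argument immediately.
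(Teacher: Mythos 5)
Your proposal is correct and follows essentially the same route as the paper: the paper likewise invokes Lemma~\ref{lem:Xua} to get $A\phi\xi_a=0$, evaluates $\la(R(\phi\xi_1,\xi)A)Z,\phi\xi_1\ra=0$ (which collapses to $4\la A\xi,Z\ra=0$, exactly your computation $R(\phi\xi_1,\xi)\phi\xi_1=-4\xi$), and then derives the contradiction from Theorem~\ref{thm:lee-suh} together with $\alpha=-2\tan(2r)\neq0$ in Theorem~\ref{thm:B}. The curvature bookkeeping you flagged as the main risk does work out as you predicted, with all $a\neq1$ contributions vanishing.
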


\subsection{The case: $\xi\in\mathfrak D^\perp$}
We suppose that $\xi\in\mathfrak D^\perp$ at each $x\in M$. Let $J_1\in\mathfrak J_x$ such that $J_1N=JN$.
Then we have
\[
\xi_1=\xi=-\theta_1\xi_1,\ 
\xi_2=\theta_1\xi_2=\phi\xi_3,\
\xi_3=\theta_1\xi_3=-\phi\xi_2,\
u_1=1,\
u_2=u_3=0
\]

\begin{lemma}\label{lem:hopf3}
$	A\xi=\alpha\xi$.
\end{lemma}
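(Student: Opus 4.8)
The plan is to show that $A\xi$ lies in $\vspan\{\xi\}$ by successively stripping away its other possible components. The starting point is the normalization recorded just above the statement: with $J_1N=JN$ we have $\xi_1=\xi$, $\phi\xi=0$, $\xi_2=\phi\xi_3$, $\xi_3=-\phi\xi_2$, $u_1=1$ and $u_2=u_3=0$. In particular $\mathcal H^\perp=\vspan\{\xi,\xi_2,\xi_3\}$ is only three-dimensional, $\la\phi\xi_a,\xi\ra=0$ for all $a$, and on $\mathcal H$ the endomorphism $\theta_1$ is symmetric with $\theta_1^2=\mathrm{id}$ (Lemma~\ref{lem:theta}(c)). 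Feeding these relations into the semi-parallel identities of this section collapses almost every $a$-sum to a single surviving term, which is what makes the computation tractable.

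First I would show $A\xi\perp\mathcal H$. Setting $Z=\xi$ and letting $Y$ range over $\mathcal H$ in (\ref{eqn:120}), every term carrying a factor $\la\xi_a,Y\ra$, $\la\phi\xi_a,Y\ra$ or $\la\phi\xi_a,\xi\ra$ drops out (the first two because $\xi_a,\phi\xi_a\in\mathcal H^\perp$, the last because $\la\phi\xi_a,\xi\ra=0$), and only the $a=1$ part of the $u_a$-term survives; using $\theta_1\xi=-\xi$ this reduces to $7\la A\xi,Y\ra+\la\theta_1Y,A\xi\ra=0$ for every $Y\in\mathcal H$. Writing $P$ for the $\mathcal H$-component of $A\xi$ and using that $\theta_1$ is symmetric with $\theta_1^2=\mathrm{id}$ on $\mathcal H$, this reads $7P+\theta_1P=0$; applying $\theta_1$ once more gives $P+7\theta_1P=0$, and the resulting $2\times2$ system (determinant $48\neq0$) forces $P=0$. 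Hence $A\xi\in\mathcal H^\perp=\vspan\{\xi,\xi_2,\xi_3\}$, so $A\xi=\alpha\xi+p\xi_2+q\xi_3$ with $p=\la A\xi,\xi_2\ra$ and $q=\la A\xi,\xi_3\ra$.

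It then remains to kill $p$ and $q$. I would put $Y=\xi$ and $Z=\xi_2$ in (\ref{eqn:120}): using $\theta_1\xi_2=\xi_2$, $\theta_1\xi_3=\xi_3$, $\phi\xi_3=\xi_2$ and $\phi\xi_2=-\xi_3$, each surviving summand becomes a multiple of $p$, and a short tally gives $-6p=0$; the symmetric choice $Z=\xi_3$ yields $q=0$. Therefore $A\xi=\alpha\xi$, i.e. $\xi$ is principal, which (via Theorem~\ref{thm:lee-suh}) will pin $M$ down as a hypersurface of type $A$, to be excluded in the remainder of the subsection. The only delicate part is the bookkeeping in the $a$-sums: because $\xi=\xi_1$ makes several of the distinguished vectors coincide or interchange under $\phi$, one must track carefully which single index survives in each term and invoke the $\theta_1$-eigenspace decomposition of $\mathcal H$ (Lemma~\ref{lem:theta_2}) rather than manipulate the sums blindly.
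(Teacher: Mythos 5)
Your proposal is correct and follows essentially the same route as the paper: both arguments work entirely from equation (\ref{eqn:120}), substituting $Z=\xi$ with $Y\in\mathcal H$ to get $\la A\xi,7Y+\theta_1Y\ra=0$ (your $2\times2$ system in $P$ and $\theta_1 P$ is just a restatement of the eigenvalue argument via Lemma~\ref{lem:theta_2}(a)), and substituting $\xi$ against $\xi_b$, $b\in\{2,3\}$, to kill the $\mathfrak D^\perp$-components --- the paper's choice $Y=\xi_b$, $Z=\xi$ differs from yours only by the antisymmetry of (\ref{eqn:120}) in $Y$ and $Z$. The only differences are cosmetic: you reverse the order of the two steps and carry out the $-6p=0$ tally explicitly where the paper merely asserts the conclusion.
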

\begin{proof}
Fixed $b\in\{2,3\}$.
By putting $Y=\xi_b$ and $Z=\xi$ in (\ref{eqn:120}), we have $\la A\xi,\xi_b\ra=0$.
Using this fact, after putting $Y\in\mathcal H$ and $Z=\xi$ in (\ref{eqn:120}), we obtain
$\la A\xi,7Y+\theta_1Y\ra=0$. 
By using Lemma~\ref{lem:theta_2}(a),
 we have 
$A\xi$ $\perp$ $\mathcal H$. Hence we conclude that  
$A\xi=\alpha\xi$.
\end{proof}


Fixed $b\in\{2,3\}$.
By putting $Y\in\mathcal H$ and $Z=\xi_b$ in (\ref{eqn:120}), we obtain
$\la A\xi_b,Y+\theta_1Y\ra=0$, which implies that $A\xi_b$ $\perp$ $\mathcal H_1(1)$.
Next, by putting $Y\in\mathcal H_1(1)$ and $Z\in\mathcal H_1(-1)$ in (\ref{eqn:120}), we have $2\la AY,Z\ra=0$.
This implies that
\begin{align}\label{eqn:AH1+a}
A\mathcal H_1(1)\subset\mathcal H_1(1).
\end{align}

Let $Y\in\mathcal H_1(1)$ be a unit vector and $Z=\phi Y$ in (\ref{eqn:700}).
Then 
\begin{align*}
0=&-\la A\xi_b,\phi\xi_b\ra+\sum_{a=1}^3\la\phi_a\phi Y,Y\ra\la A\xi_b,\phi_a\xi_b\ra	\\
 =&-\la A\xi_b,\phi\xi_b\ra+\sum_{a=1}^3\la\theta_a Y,Y\ra\la A\xi_b,\phi_a\xi_b\ra			\\
 =&-\la A\xi_b,\phi\xi_b\ra+\la A\xi_b,\phi_1\xi_b\ra=-2\la A\xi_b,\phi\xi_b\ra.			
\end{align*}
Hence, $\la A\xi_3,\xi_2\ra=0$ and so we obtain 
\begin{align}\label{eqn:AH1+b}
A\xi_b-\alpha_b\xi_b\in\mathcal H_1(-1),  \quad b\in\{2,3\}.
\end{align}
By substituting $Z=\xi_b$ in (\ref{eqn:700}), we obtain
\begin{align*}
0&=-||A\xi_b||^2A\xi_b+\alpha_bA^2\xi_b-\alpha_b\xi_b+A\xi_b+\theta_1A\xi_b-\alpha_b\theta_1\xi_b.	
\end{align*}
By (\ref{eqn:AH1+b}), we have
$\theta_1(A\xi_b-\alpha_b\xi_b)=-(A\xi_b-\alpha_b\xi_b)$.
Hence we obtain 
\begin{align}\label{eqn:AH1+c}
\alpha_b A^2\xi_b-||A\xi_b||^2A\xi_b=0,  \quad b\in\{2,3\}.
\end{align}
\begin{lemma} \label{lem:AX}
Suppose $\xi_b$ is not principal, for some $b\in\{2,3\}$. 
Let $\rho_b=||\phi_bA\xi_b||$ and $U_b=-\rho_b^{-1}\phi_b^2A\xi_b$. Then 
\begin{align}\label{eqn:AUb3}
AU_b=\rho_b\xi_b+\sigma_bU_b, \quad \alpha_b\sigma_b=\rho_b^2.
\end{align}

If $X\in\mathcal H_1(1)$ is a unit vector with $AX=\lambda X$ then either $\lambda=0$ or $\lambda=\alpha$ $(\neq0)$.
Further we have
\begin{enumerate}
\item[(a)] 
if $\lambda=\alpha$ then $0\geq 2\rho_b^2-\alpha_b^2\sum_{a=2}^3\la\theta_a U_b,X\ra^2$; and
\item[(b)]
if $\lambda=0$       then $0\geq 2\alpha_b^2-\rho_b^2\sum_{a=2}^3\la\theta_a U_b,X\ra^2$. 
\end{enumerate}
\end{lemma}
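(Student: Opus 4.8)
The plan is to first determine how $A$ acts on the two-plane $V_b:=\vspan\{\xi_b,U_b\}$, and then to feed its principal directions into the Gauss identity (\ref{eqn:curvature}). First I would prove (\ref{eqn:AUb3}). Using the almost contact identity $\phi_b^2=-\,\mathrm{id}+\xi_b\otimes\eta_b$ one gets $\phi_b^2A\xi_b=-(A\xi_b-\alpha_b\xi_b)$, so $U_b=\rho_b^{-1}(A\xi_b-\alpha_b\xi_b)$; and $\|\phi_bA\xi_b\|^2=\|A\xi_b\|^2-\alpha_b^2=\|A\xi_b-\alpha_b\xi_b\|^2$ shows $U_b$ is a unit vector with $A\xi_b=\alpha_b\xi_b+\rho_bU_b$ and $U_b\in\mathcal H_1(-1)$ by (\ref{eqn:AH1+b}). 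Substituting this into (\ref{eqn:AH1+c}) and expanding $A^2\xi_b=\alpha_bA\xi_b+\rho_bAU_b$ gives $\alpha_b\rho_b\,AU_b=\rho_b^2A\xi_b$; since $\rho_b\neq0$ (as $\xi_b$ is not principal), dividing by $\rho_b$ forces $\alpha_b\neq0$ and yields (\ref{eqn:AUb3}) with $\sigma_b=\rho_b^2/\alpha_b$. As $\alpha_b\sigma_b-\rho_b^2=0$, the restriction $A|_{V_b}$ has eigenvalues $0$ and $\tau_b:=\alpha_b+\sigma_b=(\alpha_b^2+\rho_b^2)/\alpha_b\neq0$, with unit eigenvectors $P_b:=A\xi_b/\|A\xi_b\|$ (for $\tau_b$) and $Q_b$ proportional to $\sigma_b\xi_b-\rho_bU_b$ (for $0$).

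Next I would establish $\alpha\neq0$ and the dichotomy. Feeding the principal pair $(\xi,P_b)$ into (\ref{eqn:curvature}), all $\phi$-, $\phi_a$- and $\theta_a$-terms collapse by $\phi\xi=0$, the bracket relations of Lemma~\ref{lem:theta}, and the fact that $\xi_b$ and $U_b$ lie in orthogonal $\theta_1$-eigenspaces; the identity reduces to $(\alpha-\tau_b)(\alpha\tau_b+2)=0$. Since $\tau_b\neq0$, either alternative forces $\alpha\neq0$. Feeding $(\xi,X)$ into (\ref{eqn:curvature}) and using $\la\theta_1\xi,\xi\ra=-1$, $\la\theta_1X,X\ra=1$ and $\la\theta_aX,\xi\ra=0$ collapses it to $(\alpha-\lambda)\alpha\lambda=0$, so $\lambda\in\{0,\alpha\}$ with $\alpha\neq0$, as claimed.

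For the two inequalities I would test (\ref{eqn:curvature}) on the pairs $(X,Q_b)$ when $\lambda=\alpha$ and $(X,P_b)$ when $\lambda=0$; in each case the two eigenvalues differ (by $\alpha\neq0$, resp. $\tau_b\neq0$), so the bracket vanishes. The decisive computations are $\sum_{a=1}^3\la\theta_aQ_b,X\ra^2=\tfrac{\rho_b^2}{\sigma_b^2+\rho_b^2}\sum_{a=2}^3\la\theta_aU_b,X\ra^2$ (the $a=1$ term and all $\xi_b$-contributions vanish) together with $\la\theta_1Q_b,Q_b\ra=\tfrac{\sigma_b^2-\rho_b^2}{\sigma_b^2+\rho_b^2}$, and their $P_b$-analogues $\la\theta_1P_b,P_b\ra=\tfrac{\alpha_b^2-\rho_b^2}{\alpha_b^2+\rho_b^2}$; meanwhile the surviving $\phi$- and $\phi_a$-terms are manifestly non-negative. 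Writing $S:=\sum_{a=2}^3\la\theta_aU_b,X\ra^2$ and discarding those non-negative terms, the $(X,Q_b)$ identity becomes $2\sigma_b^2-\rho_b^2S\le0$, and substituting $\sigma_b^2=\rho_b^4/\alpha_b^2$ gives (a); the $(X,P_b)$ identity becomes $2\alpha_b^2-\rho_b^2S\le0$, which is (b).

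The main obstacle will be the disciplined bookkeeping of the roughly ten inner products occurring in (\ref{eqn:curvature}) for each test pair, tracking via Lemma~\ref{lem:theta_2} which vectors land in $\mathcal H_1(1)$ or $\mathcal H_1(-1)$ and which are annihilated by orthogonality with $\mathcal H^\perp$. The genuinely delicate point is the choice of pairings: it is the seemingly unmotivated pair $(\xi,P_b)$ that produces $\alpha\neq0$, and the decision to pair $X$ with the \emph{zero}-eigenvalue direction $Q_b$ (rather than with $\xi_b$ itself, which is not principal) that both makes (\ref{eqn:curvature}) applicable and isolates the term $\rho_b^2S$ with the correct sign.
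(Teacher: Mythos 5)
Your proposal is correct and is essentially the paper's own proof: you read (\ref{eqn:AUb3}) off (\ref{eqn:AH1+c}) exactly as the paper does, and your vectors $P_b$ and $Q_b$ coincide (up to normalization, and up to the paper's typo $\xi\mapsto\xi_b$ in $E_2$) with the paper's eigenvectors $E_2$ and $E_1$, which are then paired with $X$ in (\ref{eqn:curvature}) in precisely the same way, discarding the same non-negative $\phi_a$-terms, to yield (a) and (b). The only, essentially cosmetic, deviation is the middle step: the paper derives $\alpha\neq0$ and $\lambda(\lambda-\alpha)=0$ from (\ref{eqn:130}) (taking $Y=Z\in\mathcal H_1(-1)$, resp.\ $Y=Z=X$), whereas you derive them from (\ref{eqn:curvature}) applied to the pairs $(\xi,P_b)$ and $(\xi,X)$ --- I verified that these pairs do reduce to $(\alpha-\tau_b)(\alpha\tau_b+2)=0$ and $(\alpha-\lambda)\alpha\lambda=0$ (the $\phi_a$- and $\theta_a$-terms combine rather than vanish, but the outcome you state is right), so your variant is sound.
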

\begin{proof}
The equation (\ref{eqn:AUb3}) is an immediate consequence of (\ref{eqn:AH1+c}).
If $\alpha=0$ then we set $Y\in\mathcal H_1(-1)$ in (\ref{eqn:130}) to get $AY=0$ and so $A\mathcal H_1(-1)\subset\mathcal H_1(-1)$. But this contradicts our assumption and (\ref{eqn:AH1+b}).
Hence, we have $\alpha\neq0$.

Let $X\in\mathcal H_1(1)$ be a unit vector with $AX=\lambda X$,
the existence of such $X$ is ensured by (\ref{eqn:AH1+a}). 
By putting $Y=Z=X$ in (\ref{eqn:130}), we have
$\lambda(\lambda-\alpha)=0$. 
On the other hand, we can verify that 
$AE_1=0$ and $AE_2=(\alpha_b+\sigma_b)E_2$,
where 
\[
E_1:=\frac{\rho_b\xi_b-\alpha_b U_b}{\sqrt{\rho_b^2+\alpha_b^2}}; \quad 
E_2:=\frac{\alpha_b\xi+\rho_b U_b}{\sqrt{\rho_b^2+\alpha_b^2}}.
\]

Suppose $\lambda=\alpha$.
By setting $Y_k=E_1$ and $Y_j=X$ in (\ref{eqn:curvature}), we obtain
\begin{align*}
0&=1+\la\theta_1E_1,E_1\ra+\sum^3_{a=1}\{3\la\phi_aX,E_1\ra^2-\la\theta_aE_1,X\ra^2\}		\\
0&\geq1+\la\theta_1E_1,E_1\ra-\sum^3_{a=2}\la\theta_aE_1,X\ra^2		
 		=1+\frac{\rho_b^2-\alpha_b^2}{\rho_b^2+\alpha_b^2}
 				-\sum^3_{a=2}\frac{\alpha_b^2\la\theta_aU_b,X\ra^2}{\rho_b^2+\alpha_b^2}	\\
 &\geq2\rho_b^2-\alpha_b^2\sum_{a=2}^3\la\theta_a U_b,X\ra^2.
\end{align*} 
Similarly, when $\lambda=0$, we set $Y_k=E_2$ and $Y_j=X$ in (\ref{eqn:curvature}) to get
\begin{align*}
0&\geq1+\la\theta_1E_2,E_2\ra-\sum^3_{a=2}\la\theta_aE_2,X\ra^2		
 		=1+\frac{\alpha_b^2-\rho_b^2}{\rho_b^2+\alpha_b^2}
 				-\sum^3_{a=2}\frac{\rho_b^2\la\theta_aU_b,X\ra^2}{\rho_b^2+\alpha_b^2}	\\
 &\geq2\alpha_b^2-\rho_b^2\sum_{a=2}^3\la\theta_a U_b,X\ra^2.
\end{align*} 
\end{proof}
\begin{lemma}\label{lem:hopf3b}
$A\xi_b=\alpha_b\xi_b$, for $b\in\{2,3\}$.
\end{lemma}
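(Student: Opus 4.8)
The plan is to argue by contradiction. Suppose $\xi_b$ is not principal for some $b\in\{2,3\}$, so that $\rho_b\neq0$, and consequently $\alpha_b\neq0$, $\sigma_b\neq0$ by (\ref{eqn:AUb3}), while $\alpha\neq0$ by Lemma~\ref{lem:AX}. By (\ref{eqn:AH1+a}) the shape operator $A$ preserves $\mathcal H_1(1)$, and by Lemma~\ref{lem:AX} its only eigenvalues there are $0$ and $\alpha$. Accordingly I decompose $\mathcal H_1(1)=V_0\oplus V_\alpha$ into the corresponding eigenspaces, aiming to bound their dimensions and then force a numerical contradiction.

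The first step is to locate $\theta_2 U_b$ and $\theta_3 U_b$ and to control their norms. Since $U_b\in\mathcal H_1(-1)$ is a unit vector, Lemma~\ref{lem:theta_2}(c) gives $\theta_a U_b\in\mathcal H_1(1)$ for $a\in\{2,3\}$; moreover Lemma~\ref{lem:theta}(c) together with $U_b\perp\phi\xi_a$ (as $U_b\in\mathcal H$) yields $\|\theta_a U_b\|^2=\la\theta_a^2U_b,U_b\ra=1$. Using $\theta_2\theta_3=-\phi_2\phi_3=-\phi_1$ on $\mathcal H$ and the skew-symmetry of $\phi_1$, one finds $\la\theta_2U_b,\theta_3U_b\ra=-\la U_b,\phi_1U_b\ra=0$, so $\{\theta_2U_b,\theta_3U_b\}$ is an orthonormal pair spanning a plane inside $\mathcal H_1(1)$, with $\sum_{a=2}^3\|\theta_aU_b\|^2=2$.

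Next I exploit the inequalities (a) and (b) of Lemma~\ref{lem:AX}. Writing $t:=\rho_b^2/\alpha_b^2>0$, inequality (a) states that every unit $X\in V_\alpha$ satisfies $\la\theta_2U_b,X\ra^2+\la\theta_3U_b,X\ra^2\ge 2t$, while (b) states that every unit $X\in V_0$ satisfies the same sum is $\ge 2/t$. The quadratic form $X\mapsto\la\theta_2U_b,X\ra^2+\la\theta_3U_b,X\ra^2$ has rank at most $2$, so if $\dim V_\alpha\ge3$ it would vanish on some unit vector of $V_\alpha$, contradicting (a); hence $\dim V_\alpha\le2$, and likewise $\dim V_0\le2$. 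Since $\dim V_0+\dim V_\alpha=\dim\mathcal H_1(1)=2m-4$, this already forces $m\le4$. Summing (a) over an orthonormal basis of $V_\alpha$ and (b) over one of $V_0$, and using $\sum_{a=2}^3\|\theta_aU_b\|^2=2$, gives $2\ge 2t\dim V_\alpha+(2/t)\dim V_0$. For $m=4$ both dimensions equal $2$, so $2\ge 4t+4/t\ge8$, a contradiction; for $m=3$ with the genuinely mixed splitting $\dim V_0=\dim V_\alpha=1$ one gets $2\ge 2t+2/t\ge4$, again a contradiction.

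The remaining and hardest case is $m=3$ with $\mathcal H_1(1)$ a single eigenspace (either $V_0=\mathcal H_1(1)$ or $V_\alpha=\mathcal H_1(1)$). Here the orthonormal pair $\{\theta_2U_b,\theta_3U_b\}$ exhausts the two-dimensional $\mathcal H_1(1)$, so the rank argument degenerates (the form equals $\|X\|^2$) and the coarse inequalities only yield $t\le\tfrac12$, respectively $t\ge2$, with no contradiction. To close this case I would return to the exact form of the Gauss/semi-parallel identity (\ref{eqn:curvature}), \emph{before} the nonnegative terms $3\la\phi_aX,E_1\ra^2$ are discarded, and to the companion equations obtained by pairing the principal vector of eigenvalue $\alpha_b+\sigma_b$ with the vectors of $\mathcal H_1(1)$ and with $\xi$; retaining these terms and inserting the explicit values of $\theta_a\xi_b$ and $\phi_a\xi_b$ from Lemma~\ref{lem:theta} should pin down the surviving parameters and produce the contradiction. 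I expect this low-dimensional bookkeeping, rather than the general rank/dimension count, to be the main technical obstacle.
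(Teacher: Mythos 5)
Your overall strategy --- decomposing $\mathcal H_1(1)=V_0\oplus V_\alpha$, showing $\{\theta_2U_b,\theta_3U_b\}$ is an orthonormal pair inside $\mathcal H_1(1)$, and playing the rank of the form $q(X)=\sum_{a=2}^3\la\theta_aU_b,X\ra^2$ against the lower bounds of Lemma~\ref{lem:AX} --- is sound, and it is essentially a quantitative repackaging of the paper's own two-case argument. But there is a genuine gap, and it originates in a dimension miscount. In the present subsection $\xi\in\mathfrak D^\perp$ and $J_1N=JN$, so $\xi_1=\xi$, $\phi\xi_1=0$, $\xi_2=\phi\xi_3$, $\xi_3=-\phi\xi_2$; hence $\mathcal H^\perp=\vspan\{\xi,\xi_2,\xi_3\}$ has dimension $3$, $\dim\mathcal H=4m-4$, and $\dim\mathcal H_1(1)=2m-2\geq4$. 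The value $2m-4$ you used is the one valid in Section 4.1, where $\xi\notin\mathfrak D$ and $\xi\notin\mathfrak D^\perp$ and $\dim\mathcal H^\perp=7$; it does not apply here. Because of this error, the ``remaining and hardest case'' you could not close --- $m=3$ with $\mathcal H_1(1)$ two-dimensional and consisting of a single eigenspace --- is vacuous, and the portion of your proposal devoted to it is only a plan (``should pin down the surviving parameters''), not a proof. As written, the argument is therefore incomplete.

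The good news is that with the correct dimension your own machinery finishes the proof with no new input. Your rank argument gives $\dim V_0\le2$ and $\dim V_\alpha\le2$ (a unit vector of $V_0$ or $V_\alpha$ orthogonal to the plane $\vspan\{\theta_2U_b,\theta_3U_b\}$ would violate inequality (a) or (b) of Lemma~\ref{lem:AX}); since $\dim V_0+\dim V_\alpha=2m-2\ge4$, the single-eigenspace configuration is impossible, and one is forced to $m=3$ with $\dim V_0=\dim V_\alpha=2$, whereupon your trace inequality reads $2\ge 4t+4/t\ge8$, a contradiction. This is exactly parallel to the paper's proof: its Case 1 (only one of the eigenvalues $0,\alpha$ occurs on $\mathcal H_1(1)$) is excluded by choosing a unit $X\in\mathcal H_1(1)$ perpendicular to both $\theta_2U_b$ and $\theta_3U_b$, which exists precisely because $2m-2\ge4>2$; its Case 2 (both eigenvalues occur) is excluded by adding inequalities (a) and (b) for $X_1\in V_\alpha$ and $X_2\in V_0$ and invoking Cauchy--Schwarz together with the orthonormality of $\theta_2U_b,\theta_3U_b$, with no trace sum or AM--GM needed. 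So fix the dimension count, delete the phantom case, and your proof stands.
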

\begin{proof}
Suppose to the contrary that $\xi_b$ is not principal.
We consider the following two cases.

\medskip
\textbf{Case 1.} $AX=\alpha X$ or $AX=0$, for all $X\in\mathcal H_1(1)$.

Since $\dim \mathcal H_1(1)=2m-2\geq4$, there is a unit vector $X\in\mathcal H_1(1)$, which is perpendicular to $\theta_2U_b$ and $\theta_3U_b$. Then Lemma~\ref{lem:AX}(a)--(b) imply  that either $0\geq2\rho_b^2$
or $0\geq2\alpha_b^2$. However, this contradicts the fact that $\alpha_b\sigma_b=\rho_b^2\neq0$.
Hence this case cannot occur.
	
\medskip
\textbf{Case 2.} $AX_1=\alpha X_1$ and $AX_2=0$, for some unit vectors $X_1$, $X_2\in\mathcal H_1(1)$.

The two inequalities in Lemma~\ref{lem:AX} imply that 
\begin{align*}
0\geq \alpha_b^2\sum_{a=2}^3\{1-\la\theta_a U_b,X_1\ra^2\}+\rho_b^2\sum_{a=2}^3\{1-\la\theta_a U_b,X_2\ra^2\}.
\end{align*}
Since $\theta_aU_b$ and $X_1$ are unit vectors, by Cauchy-Schwarz Inequality,
$1-\la\theta_aU_b,X_1\ra^2\geq0$ and equality holds if and only if $X_1=\pm\theta_aU_b$.
Further, since $\theta_2\theta_3U_b=-\theta_3\theta_2U_b$, we have $\theta_2U_b$ and $\theta_3U_b$ are orthonormal.
Hence
$1-\la\theta_2U_b,X_1\ra^2$ and $1-\la\theta_3U_b,X_1\ra^2$ cannot be both zero.
Hence we conclude that $\sum_{a=2}^3\{1-\la\theta_a U_b,X_1\ra^2\}>0$.
Similarly, we have $\sum_{a=2}^3\{1-\la\theta_a U_b,X_2\ra^2\}>0$.
It follows that $\alpha_b=\rho_b=0$. This is a contradiction.

In view of these cases, we conclude that $\xi_b$ is principal for $b\in\{2,3\}$.
\end{proof}

\subsection{Proof of Theorem~\ref{thm:main}}
We are in a position to prove Theorem~\ref{thm:main}.
We have showed that if a real hypersurface $M$ in $G_2(\mathbb C_{m+2})$ is semi-parallel then $A\xi=\alpha\xi$ and $A\mathfrak D^\perp\subset\mathfrak D^\perp$; and 
$\xi\in\mathfrak D^\perp$ at each point in $M$.
According to Theorem~\ref{thm:classification}, $M$ is an open part of a real hypersurface of type $A$.
Now we follow the notations in Theorem~\ref{thm:A}.
If we put $Y_k=\xi_2$ and $Y_j\in T_\mu=\mathcal H_1(1)$ in (\ref{eqn:curvature}), then
\[
0=1+\sum_{a=1}^3\la\theta_aY_j,Y_j\ra\la\theta_a\xi_2,\xi_2\ra=2
\]
which is impossible.
This completes the proof of Theorem~\ref{thm:main}.

\section{Recurrent real hypersurfaces in $G_2(\mathbb C_{m+2})$}
In this section, we shall show that there are no recurrent real hypersurfaces in $G_2(\mathbb C_{m+2})$.
We begin with the following result.

\begin{theorem}\label{thm:recurrent}
Let $M$ be a connected Riemannian manifold and let $F$ be a symmetric endomorphism on $TM$.
If $F$ is recurrent then $F$ is semi-parallel. 
\end{theorem}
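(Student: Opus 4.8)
The plan is to unwind the definition of recurrence for a symmetric $(1,1)$-tensor $F$ and show directly that $R\cdot F=0$. Recurrence means there is a $1$-form $\omega$ with $\nabla F=F\otimes\omega$, i.e. $(\nabla_X F)Y=\omega(X)\,FY$ for all $X,Y\in TM$. (Here the van der Waerden--Bortolotti connection $\bar\nabla$ restricted to an intrinsic $(1,1)$-tensor on $M$ is just the Levi-Civita connection $\nabla$, since there is no normal-bundle component.) The quantity I must kill is
\begin{align*}
(R(X,Y)\cdot F)Z=R(X,Y)(FZ)-F(R(X,Y)Z),
\end{align*}
which is the standard action of the curvature operator as a derivation on the tensor $F$. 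The natural route is the Ricci identity: for any $(1,1)$-tensor $F$ one has $(R(X,Y)\cdot F)Z=(\nabla^2_{X,Y}F)Z-(\nabla^2_{Y,X}F)Z$, where $\nabla^2_{X,Y}F=\nabla_X\nabla_Y F-\nabla_{\nabla_X Y}F$ is the second covariant derivative. So it suffices to compute the second derivative of $F$ and antisymmetrize in $X,Y$.

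First I would differentiate the recurrence relation once more. From $(\nabla_Y F)=\omega(Y)F$, covariant differentiation in the $X$ direction gives, schematically,
\begin{align*}
(\nabla^2_{X,Y}F)Z=(\nabla_X\omega)(Y)\,FZ+\omega(Y)\,(\nabla_X F)Z=(\nabla_X\omega)(Y)\,FZ+\omega(Y)\omega(X)\,FZ.
\end{align*}
The key structural point is that the right-hand side splits into two terms: one involving $(\nabla_X\omega)(Y)$ and one involving $\omega(X)\omega(Y)$, both multiplied by the same factor $FZ$. Antisymmetrizing in $X$ and $Y$, the symmetric term $\omega(X)\omega(Y)FZ$ cancels outright, so
\begin{align*}
(R(X,Y)\cdot F)Z=\{(\nabla_X\omega)(Y)-(\nabla_Y\omega)(X)\}\,FZ=d\omega(X,Y)\,FZ.
\end{align*}
Thus semi-parallelism reduces entirely to showing that the recurrence form $\omega$ is closed, $d\omega=0$.

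The main obstacle, then, is forcing $d\omega=0$, and this is exactly where the hypothesis that $F$ is \emph{symmetric} must be used — without it the statement is false. The idea is to exploit the symmetry of $(R(X,Y)\cdot F)Z$ that is forced by $F$ being self-adjoint. Since $F$ is symmetric and $R(X,Y)$ is skew-symmetric as an endomorphism, the tensor $R(X,Y)\cdot F=[R(X,Y),F]$ is itself a \emph{symmetric} endomorphism of $TM$ for each fixed $X,Y$ (the commutator of a symmetric and a skew-symmetric operator is symmetric). On the other hand, from the displayed formula $[R(X,Y),F]=d\omega(X,Y)\,F$, which is again symmetric automatically — so I would instead take traces or pair against suitable vectors to extract a scalar contradiction unless $d\omega=0$. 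Concretely, pairing the identity $\la (R(X,Y)\cdot F)Z,W\ra=d\omega(X,Y)\la FZ,W\ra$ with its $Z\leftrightarrow W$ swap and using $\la FZ,W\ra=\la Z,FW\ra$ shows that $d\omega(X,Y)\la FZ,W\ra$ must coincide with an expression that is symmetric in $Z,W$; combined with the independent skew-symmetry coming from $R$'s first Bianchi/pair-symmetry, one pins down $d\omega(X,Y)(\la FZ,W\ra-\la Z,FW\ra)=0$, and then a second contraction against the symmetry of $R(X,Y)Z$ in its own slots yields $d\omega(X,Y)\,\trace F$ or $d\omega(X,Y)\|F\|$-type terms that vanish, forcing $d\omega\equiv0$ wherever $F\neq0$. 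Since $M$ is connected and $F$ is nonzero (it is a genuine tensor), $d\omega=0$ globally and hence $R\cdot F=0$. I expect the delicate bookkeeping to be precisely this last extraction of $d\omega=0$ from the symmetry of $F$; everything before it is the routine Ricci-identity computation sketched above.
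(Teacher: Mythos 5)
Your reduction of semi-parallelism to the closedness of $\omega$ is correct and is exactly the computation the paper performs: differentiating $(\nabla_Y F)Z=\omega(Y)FZ$ once more and antisymmetrizing kills the symmetric term $\omega(X)\omega(Y)FZ$ and leaves $(R(X,Y)\cdot F)Z=d\omega(X,Y)\,FZ$. The gap is in the step you yourself flag as delicate, namely forcing $d\omega=0$, and what you have written there does not work as it stands. The identity you claim to pin down, $d\omega(X,Y)(\la FZ,W\ra-\la Z,FW\ra)=0$, is vacuous: since $F$ is symmetric, the expression in parentheses is identically zero, so it carries no information. Likewise, the observation that $[R(X,Y),F]$ is a symmetric endomorphism gives nothing, because the right-hand side $d\omega(X,Y)F$ is symmetric as well. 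And of the two contractions you gesture at, the plain trace yields only $d\omega(X,Y)\trace F=0$, which is insufficient: a nonzero symmetric endomorphism can be traceless, so this does not force $d\omega=0$ where $F\neq0$.

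The route is salvageable, but you must name the right contraction: compose with $F$ \emph{before} tracing. By cyclicity of the trace, $\trace\bigl([R(X,Y),F]\circ F\bigr)=\trace\bigl(R(X,Y)F^2\bigr)-\trace\bigl(FR(X,Y)F\bigr)=0$, while the right-hand side gives $d\omega(X,Y)\trace(F^2)$, and $\trace(F^2)=||F||^2$ precisely because $F$ is symmetric (this is where the hypothesis enters your argument). Hence $d\omega(X,Y)\,||F||^2=0$, so $d\omega=0$ wherever $F\neq0$; and wherever $F=0$, the tensor $R\cdot F=[R(\cdot,\cdot),F]$ vanishes pointwise anyway. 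Note that your closing inference, that connectedness of $M$ plus $F$ being a nonzero tensor field gives $d\omega=0$ globally, is not justified as stated — a nonzero tensor field may still vanish on a subset; one either argues pointwise as above, or uses that a recurrent $F$ satisfies a linear first-order ODE along curves and hence cannot vanish at any point of a connected manifold. With this repair, your argument is a genuine alternative to the paper's proof of Theorem~\ref{thm:recurrent}: the paper works instead on the open dense set where the eigenvalue multiplicities of $F$ are locally constant, picks a nowhere-zero eigenvalue function $\lambda$ with unit eigenvector field $Y$, uses the symmetry of $F$ to get $X\lambda=\la(\nabla_XF)Y,Y\ra=\lambda\omega(X)$, i.e.\ $d\lambda=\lambda\omega$, and then concludes $0=d^2\lambda=\lambda\,d\omega$. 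Your (repaired) argument is purely algebraic and pointwise, avoiding eigenvector fields and the open-dense-set bookkeeping entirely; the paper's avoids having to find the correct contraction.
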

\begin{proof}
Suppose $F$ is recurrent, that is $(\nabla_X F)Y=\omega(X)FY$, for all $X,Y\in TM$, where $\omega$ is a $1$-form on $M$ and $\nabla$ is the levi-Civita connection on $M$.
Let $\mathcal U$ be the maximal open dense subset of $M$ such that the multiplicities of the eigenvalue functions of $F$ are constant in each component of $\mathcal U$.
It is trivial if $F=0$ on $\mathcal U$. Consider a point $x\in\mathcal U$ such that $F\neq0$. Then there is an eigenvalue $\lambda$ of $F$, which is nowhere zero in a neighborhood $\mathcal U_1\subset\mathcal U$ of $x$. 
Let $Y$ be a unit eigenvector field of $F$ on $\mathcal U_1$ corresponding to $\lambda$. Then
\[
X\lambda=\la(\nabla_XF)Y,Y\ra=\lambda\omega(X)
\]
for all vector field $X$ on $\mathcal U_1$, or equivalently $d\lambda=\lambda\omega$. Hence
\[
0=d^2\lambda=d\lambda\wedge\omega+\lambda d\omega=\lambda d\omega.
\]
This means that $d\omega=0$ at $x$ and so $(\nabla_X\omega)Y=(\nabla_Y\omega)X$, for all $X,Y\in T_xM$. 

Denote by $\nabla_{X,Y}F$ the second order covariant derivative of $F$, that is,
\[
(\nabla_{X,Y}F)Z=\nabla_X\{(\nabla_YF)Z\}-(\nabla_YF)\nabla_XZ-(\nabla_{\nabla_XY}F)Z
\]
for all vector fields $X,Y,Z$ tangent to $M$.
It follows that 
\[
(\nabla_{X,Y}F)Z=\{(\nabla_X\omega)Y\}FZ+\omega(Y)(\nabla_XF)Z
=\{(\nabla_X\omega)Y\} FZ+\omega(Y)\omega(X)FZ
\]
for all $X,Y,Z\in T_xM$.
It follows that 
\[
(R(X,Y)F)Z=(\nabla_{X,Y}F)Z-(\nabla_{Y,X}F)Z=0
\]
and so $R\cdot F=0$ at all such $x\in\mathcal U_1$. By a standard topological argument, we conclude that $R\cdot F=0$ on $M$.
\end{proof}

The following result can be obtained immediately from Theorem~\ref{thm:main} and Theorem~\ref{thm:recurrent}.
\begin{corollary}\label{cor:recurrent}
There does not exist any recurrent real hypersurface $M$ in $G_2(\mathbb C_{m+2})$, $m\geq3$. 
\end{corollary}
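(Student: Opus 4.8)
The plan is to deduce the corollary by combining the two main results of the paper, with only a short bridging observation about the meaning of recurrence for a hypersurface. First I would unwind the definition: by the terminology fixed in the Introduction, a real hypersurface $M$ is recurrent precisely when its second fundamental form $h$ is recurrent, i.e.\ $\bar\nabla h=h\otimes\omega$ for some $1$-form $\omega$, where $\bar\nabla$ is the van der Waerden--Bortolotti connection. Since $M$ has codimension one, the normal bundle is the line bundle spanned by the unit normal $N$, and $\la N,N\ra\equiv1$ forces the normal connection to annihilate $N$; hence that connection is flat and contributes nothing. Writing $h(X,Y)=\la AX,Y\ra N$, recurrence of $h$ is therefore equivalent to the shape operator $A$, viewed as a symmetric endomorphism of $TM$, satisfying $(\nabla_X A)Y=\omega(X)AY$ for all $X,Y$; that is, $A$ is a recurrent symmetric endomorphism in the exact sense of Theorem~\ref{thm:recurrent}.

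The second step is to apply Theorem~\ref{thm:recurrent} directly with $F=A$. That theorem guarantees that any recurrent symmetric endomorphism on a connected Riemannian manifold is semi-parallel, so we obtain $R\cdot A=0$ on $M$. By the definition of semi-parallelism for real hypersurfaces recorded at the start of Section~4, this says exactly that $M$ is a semi-parallel real hypersurface in $G_2(\mathbb C_{m+2})$.

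Finally I would invoke Theorem~\ref{thm:main}: for $m\geq3$ there exists no semi-parallel real hypersurface in $G_2(\mathbb C_{m+2})$. This directly contradicts the conclusion of the previous step, so no recurrent real hypersurface can exist, which is the assertion of the corollary. The argument is essentially immediate once the two theorems are available; the only point that requires any care is the bridging observation in the first step, namely that for a hypersurface recurrence of the second fundamental form coincides with recurrence of the shape operator as a $(1,1)$-tensor. I do not anticipate a genuine obstacle here, since the flatness of the rank-one normal connection makes that identification routine.
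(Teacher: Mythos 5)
Your proposal is correct and follows exactly the paper's route: the paper derives Corollary~\ref{cor:recurrent} immediately by combining Theorem~\ref{thm:recurrent} (applied to $F=A$) with Theorem~\ref{thm:main}. Your bridging observation that the rank-one normal connection is flat, so recurrence of $h$ amounts to $(\nabla_X A)Y=\omega(X)AY$, is the routine identification the paper leaves implicit, and you have it right.
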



\end{document}